\newtheorem{theorem}{Theorem}
\newtheorem{lemma}[theorem]{Lemma}
\newtheorem{example}[theorem]{Example}
\newtheorem{proposition}[theorem]{Proposition}
\newtheorem{corollary}[theorem]{Corollary}
\long\def\symbolfootnote[#1]#2{\begingroup
\def\thefootnote{\fnsymbol{footnote}}\footnote[#1]{#2}\endgroup}
\DeclareMathOperator{\Av}{\mathrm{Av}}
\DeclareMathOperator{\hypergeom}{\mathrm{hypergeom}}
\newcommand{\fl}{\mathrm{fl}}
\newcommand{\pattern}[4]{										% mesh pattern
  \raisebox{0.6ex}{
  \begin{tikzpicture}[scale=0.35, baseline=(current bounding box.center), #1]
  \useasboundingbox (0.0,-0.1) rectangle (#2+1.4,#2+1.1);
    \foreach \x/\y in {#4}
      \fill[pattern=north east lines] (\x,\y) rectangle +(1,1);
    \draw (0.01,0.01) grid (#2+0.99,#2+0.99);
    \foreach \x/\y in {#3}
      \filldraw (\x,\y) circle (6pt);
  \end{tikzpicture}}
}
\title{Restricted non-separable planar maps and some pattern avoiding permutations}
\author{Sergey Kitaev\thanks{Supported by a grant from Iceland, Liechtenstein and Norway through the EEA Financial Mechanism. Supported and coordinated by Universidad Complutense de Madrid.}
\and
Pavel Salimov\thanks{Supported by the Russian Foundation for Basic Research grant no.\ 10-01-00616 and by the Icelandic Research Fund grant no.\ 090038013.}
\and
Christopher Severs\thanks{Supported by grant no.\ 090038013 from the Icelandic Research Fund.}
\and
Henning Ulfarsson\thanks{Supported by grant no.\ 090038013-4 from the Icelandic Research Fund.}}
\begin{document}

\maketitle

\begin{abstract}
Tutte founded the theory of enumeration of planar maps in a series of papers in the
1960s. Rooted non-separable planar maps are in bijection with West-$2$-stack-sortable
permutations, $\beta(1,0)$-trees introduced by Cori, Jacquard and Schaeffer in 1997,
as well as a family of permutations defined by the avoidance of two four letter
patterns. In this paper we give upper and lower bounds on the number of
multiple-edge-free rooted non-separable planar maps. We also use the
bijection between rooted non-separable planar maps and a certain class of permutations, found by Claesson, Kitaev and Steingrimsson in 2009, to show that
the number of $2$-faces (excluding the root-face) in a map equals the number of occurrences of a certain mesh pattern in the permutations. We further show
that this number is also the number of nodes in the corresponding $\beta(1,0)$-tree
that are single children with maximum label. Finally, we give asymptotics for some
of our enumerative results.
\end{abstract}

\section{Introduction}

A \emph{map} is a partition of a compact oriented surface into three
finite sets: a set of \emph{vertices}, a set of \emph{edges}
and a set of \emph{faces} (disjoint simply connected domains). Maps
are considered up to orientation-preserving homeomorphisms of the
surface. In this paper we deal with classical \emph{planar maps}
considered, for example, by Tutte~\cite{T1} who founded the
theory of their enumeration in a series of papers in the 1960s
(see~\cite{CS} for references). The maps considered by us are {\em
rooted}, meaning that a directed edge is distinguished as the root, which defines a \emph{root-vertex}
and a \emph{root-face} (the face located to the right while traversing the root following its direction). For convenience we assume that the root-face of a map coincides with the map's outer face.

A \emph{cut vertex} in a map $M$ is a vertex $v$ such that there exists a partition of
the edge set of $M$ into two subsets such that $v$ is the unique
vertex which is incident with edges of the two subsets. A planar map
is \emph{non-separable} if it has no loops and no cut vertices.
From here on we will use \emph{maps} to mean \emph{rooted non-separable planar maps} for the sake of brevity. All maps on four edges are given in Figure~\ref{maps}.

\begin{figure}[ht]
\begin{center}
\begin{tikzpicture}[scale = 0.85, place/.style = {circle,draw = black!50,fill = gray!20,thick,inner sep=3pt}, auto,
  decoration={
    markings,%  switch on markings
    mark=at position 0.5 with {\arrow[black]{stealth};}}
    ]

 % vertices
 \node [place] (L) at (180:1)      {};
 \node [place] (R) at (0:1)      {};
 
 % edges
 \draw [-,semithick, bend left=40] (L) to (R);
 \draw [-,semithick, postaction={decorate}, bend right=80] (R) to (L);
 \draw [-,semithick, bend right=40] (L) to (R);
 \draw [-,semithick, bend right=80] (L) to (R);
 
 \begin{scope}[xshift = 3cm]
  % vertices
 \node [place] (L) at (180:1)     {};
 \node [place] (M) at (0,0)     {};
 \node [place] (R) at (0:1)    {};
 
 % edges
 \draw [-,semithick, bend left=40]              (L) to (R);
 \draw [-,semithick]                      (L) to (M);
 \draw [-,semithick]                      (M) to (R);
 \draw [-,semithick, postaction={decorate}, bend right=40]  (L) to (R);
 \end{scope}
 
 \begin{scope}[xshift = 6cm]
 % vertices
 \node [place] (L) at (180:1)     {};
 \node [place] (M) at (0,0)     {};
 \node [place] (R) at (0:1)    {};
 
 % edges
 \draw [-,semithick]                      (L) to (M);
 \draw [-,semithick]                      (M) to (R);
 \draw [-,semithick, bend right=40]             (R) to (L);
 \draw [-,semithick, bend right=40,, postaction={decorate}]   (M) to (R);
 \end{scope}

 \begin{scope}[yshift = -2cm]
 % vertices
 \node [place] (L) at (180:1)     {};
 \node [place] (M) at (0,0)     {};
 \node [place] (R) at (0:1)    {};
 
 % edges
 \draw [-,semithick]                      (L) to (M);
 \draw [-,semithick]                      (M) to (R);
 \draw [-,semithick, bend right=40]             (M) to (R);
 \draw [-,semithick, bend right=40, postaction={decorate}]    (R) to (L);
 \end{scope}
 
 \begin{scope}[xshift = 3cm, yshift = -2cm]
 % vertices
 \node [place] (L) at (180:1)     {};
 \node [place] (M) at (0,0)     {};
 \node [place] (R) at (0:1)    {};
 
 % edges
 \draw [-,semithick, bend right = 40]             (L) to (M);
 \draw [-,semithick]                      (L) to (M);
 \draw [-,semithick]                      (M) to (R);
 \draw [-,semithick, bend right=40, postaction={decorate}]    (R) to (L);
 \end{scope}
 
 \begin{scope}[xshift = 6cm, yshift = -2cm]
 % vertices
 \node [place] (L) at (-1,0)     {};
 \node [place] (M1) at (0,0)     {};
 \node [place] (M2) at (1,0)     {};
 \node [place] (R) at (2,0)    {};
 
 % edges
 \draw [-,semithick]                      (L) to (M1);
 \draw [-,semithick]                      (M1) to (M2);
 \draw [-,semithick]                      (M2) to (R);
 \draw [-,semithick, bend right=40, postaction={decorate}]    (R) to (L);
 \end{scope}
 
\end{tikzpicture}
 \caption{All rooted non-separable planar maps on four edges}
 \label{maps}
\end{center}
\end{figure}
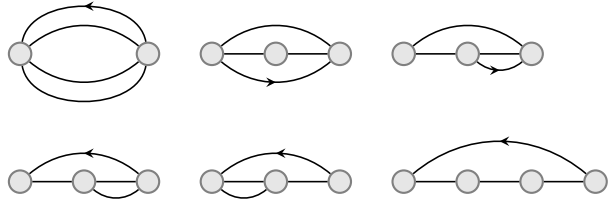

West~\cite{W90} defined and described permutations sortable by two passes through
a stack (now often called \emph{West-$2$-stack-sortable} permutations). He also
conjectured that the number of such permutations of length $n$ is the same as the
number of maps on $n+1$ edges. This conjecture was
first proven by Zeilberger~\cite{Z}. Dulucq et al.~\cite{DGW,DGG} and Goulden and
West~\cite{GW} later found combinatorial proofs. These maps also have connections to
theoretical physics, where they are used as a discrete model for 2D quantum gravity, see e.g., Schaeffer and Zinn-Justin~\cite{SZ}. Finally they have connections to pattern-avoiding permutations (see, for example, Kitaev~\cite[Chapter 2.2]{S}).

The number of rooted non-separable planar maps on $n+1$ edges was
first determined by Tutte~\cite{T} and is given by
\[
\frac{4(3n)!}{n!(2n+2)!}.
\]
This enumeration was also proved differently by
Brown~\cite{B}. Moreover, Zeilberger~\cite{Z} showed that the generating function $A(x)$ for the maps is given by $A(x) = 2 + xB(x)$, where $B(x)$ satisfies the functional
equation
\[
B(x) = 1 - 8x + 2x(5-6x)B(x) - 2x^2(1+3x)B^2(x) - x^4B^3(x).
\]
Alternatively, we can express this in terms of a \emph{hypergeometric function}. Let $\mathbf{a} = (a_1, \dotsc, a_p)$, $\mathbf{b} = (b_1, \dotsc, b_q)$ be vectors of real numbers. Then $\hypergeom(\mathbf{a},\mathbf{b},z)$ is the function whose power series is
\[
\sum_{k \geq 0} \frac{a_1^{\bar k} \dotsm a_p^{\bar k}}{b_1^{\bar k} \dotsm b_q^{\bar k}} \frac{z^k}{k!},
\]
and $a^{\bar k}$ is the rising factorial $a(a+1)\dotsm(a+k-1)$. Then the generating function for rooted non-separable planar maps is given by
\[
A(x)=\frac{2}{3x}\left(\hypergeom\left(\left[-\frac{2}{3}, -\frac{1}{3}\right],\left[\frac{1}{2}\right],\frac{27x}{4}\right)-1\right),
\]
see~\cite[A000139]{OEIS}. From the functional equation one can derive the asymptotic
estimate
\[
[x^n]A(x) \sim \frac{2}{27}\sqrt{\frac{3}{\pi n^5}} \left(\frac{27}{4}\right)^n,
\]
see Flajolet and Sedgewick~\cite{FS}, Example IX.42.

Cori et al.~\cite{CJS} introduced \emph{description trees}, thus placing under one roof several classes of planar maps. In particular, $\beta(1,0)$-trees (defined below) are in one-to-one correspondence with rooted non-separable planar maps. Description trees may be useful in dealing with the corresponding planar maps. For example, Claesson et al.~\cite{CKD} obtained non-trivial equidistribution results on so-called \emph{bicubic maps} using $\beta(0,1)$-trees.
In this paper, our main focus is understanding the structure of $\beta(1,0)$-trees corresponding to certain restricted maps and deriving some corollaries of that structure. In Section~\ref{sec:primmaps} we consider maps without internal $2$-faces,
enumerate them in Theorem~\ref{enumPrimMaps} and find the corresponding property of
$\beta(1,0)$-trees in Proposition~\ref{aProp1}. In Section~\ref{sec:mapsrestr} we use the enumeration of
internal $2$-face-free maps to derive the enumeration of $2$-face-free maps. This
provides an upper bound for multiple-edge-free maps which is asymptotically $5.75^n$.
We then use $\beta(1,0)$-trees with labels restricted to $\{1,2,3\}$ to find a lower bound which is asymptotically $4.24^n$.
In Section~\ref{sec:patts} we show that internal $2$-face-free maps correspond to
permutations avoiding two four letter patterns and one mesh pattern on two letters.
In Theorem~\ref{thm:perms-dist} we show that occurrences of the mesh pattern equal the
number of $2$-faces (excluding the root-face) in a map, and the number of nodes
in a $\beta(1,0)$-tree that are a single child with maximum label.

We believe that our studies will be of help in future research on rooted non-separable planar maps, for example, from an enumerative point of view.

\section{Preliminaries on $\beta(1,0)$-trees and the bijection to maps}
A \emph{$\beta(1,0)$-tree} is a rooted planar tree labeled with positive integers such that the leaves have label $1$,
the root has label equal to the sum of its children's labels and any other node has label no greater than the sum of its children's labels.
All $\beta(1,0)$-trees on three edges are presented in Figure~\ref{beta10}.
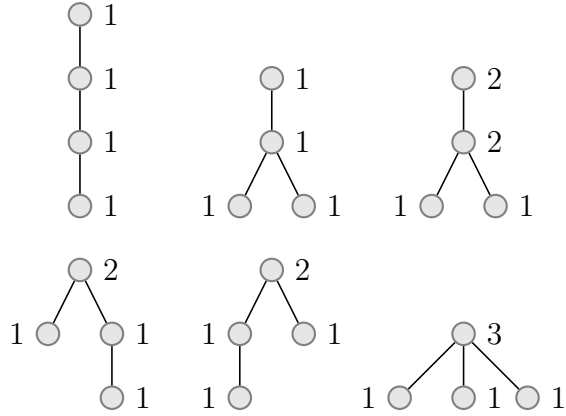
\begin{figure}[ht]
\begin{center}
\begin{tikzpicture}[scale = 0.85, place/.style = {circle,draw = black!50,fill = gray!20,thick,inner sep=3pt}, auto]

 % nodes
 \node [place] (1) at (0,2)  [label = right:$1$]     {};
 \node [place] (2) at (0,1)  [label = right:$1$]       {};
 \node [place] (3) at (0,0)  [label = right:$1$]       {};
 \node [place] (4) at (0,-1) [label = right:$1$]       {};
 
 % edges
 \draw [-,semithick]      (1) to (2);
 \draw [-,semithick]      (2) to (3);
 \draw [-,semithick]      (3) to (4);
 
 \begin{scope}[xshift = 3cm]
 % nodes
 \node [place] (1) at (0,1)  [label = right:$1$]     {};
 \node [place] (2) at (0,0)  [label = right:$1$]       {};
 \node [place] (3) at (0.5,-1)  [label = right:$1$]      {};
 \node [place] (4) at (-0.5,-1) [label = left:$1$]       {};
 
  % edges
 \draw [-,semithick]      (1) to (2);
 \draw [-,semithick]      (2) to (3);
 \draw [-,semithick]      (2) to (4);
 \end{scope}
 
 \begin{scope}[xshift = 6cm]
 % nodes
 \node [place] (1) at (0,1)  [label = right:$2$]     {};
 \node [place] (2) at (0,0)  [label = right:$2$]       {};
 \node [place] (3) at (0.5,-1)  [label = right:$1$]      {};
 \node [place] (4) at (-0.5,-1) [label = left:$1$]       {};
 
  % edges
 \draw [-,semithick]      (1) to (2);
 \draw [-,semithick]      (2) to (3);
 \draw [-,semithick]      (2) to (4);
 \end{scope}

 \begin{scope}[xshift = 0cm, yshift = -3cm]
 % nodes
 \node [place] (1) at (0,1)  [label = right:$2$]     {};
 \node [place] (2) at (-0.5,0)  [label = left:$1$]       {};
 \node [place] (3) at (0.5,0)  [label = right:$1$]       {};
 \node [place] (4) at (0.5,-1) [label = right:$1$]       {};
 
  % edges
 \draw [-,semithick]      (1) to (2);
 \draw [-,semithick]      (1) to (3);
 \draw [-,semithick]      (3) to (4);
 \end{scope}
 
 \begin{scope}[xshift = 3cm, yshift = -3cm]
 % nodes
 \node [place] (1) at (0,1)  [label = right:$2$]     {};
 \node [place] (2) at (-0.5,0)  [label = left:$1$]       {};
 \node [place] (3) at (0.5,0)  [label = right:$1$]       {};
 \node [place] (4) at (-0.5,-1) [label = left:$1$]       {};
 
  % edges
 \draw [-,semithick]      (1) to (2);
 \draw [-,semithick]      (1) to (3);
 \draw [-,semithick]      (2) to (4);
 \end{scope}
 
 \begin{scope}[xshift = 6cm, yshift = -3cm]
 % nodes
 \node [place] (1) at (0,0)  [label = right:$3$]     {};
 \node [place] (2) at (-1,-1)  [label = left:$1$]      {};
 \node [place] (3) at (0,-1)  [label = right:$1$]      {};
 \node [place] (4) at (1,-1) [label = right:$1$]       {};
 
  % edges
 \draw [-,semithick]      (1) to (2);
 \draw [-,semithick]      (1) to (3);
 \draw [-,semithick]      (1) to (4);
 \end{scope}
 
\end{tikzpicture}
 \caption{All $\beta(1,0)$-trees on three edges}
 \label{beta10}
\end{center}
\end{figure}

We refer to~\cite{CJS} for more details on the bijection between $\beta(1,0)$-trees and the maps in question. Here we provide a short description illustrated by the example in Figure~\ref{bijection}. The basic strategy is to define maps corresponding to leaves, and to find the map corresponding to an internal node once all the maps corresponding to its children are determined; at the last step, we find the map corresponding to the root, once the maps corresponding to the root's children are found. More precisely, begin with assigning to the leaves of a given tree the maps as in Figure~\ref{bijection}, where $R$ indicates the root vertex and the star is an auxiliary vertex mark that will disappear at the end of the procedure.

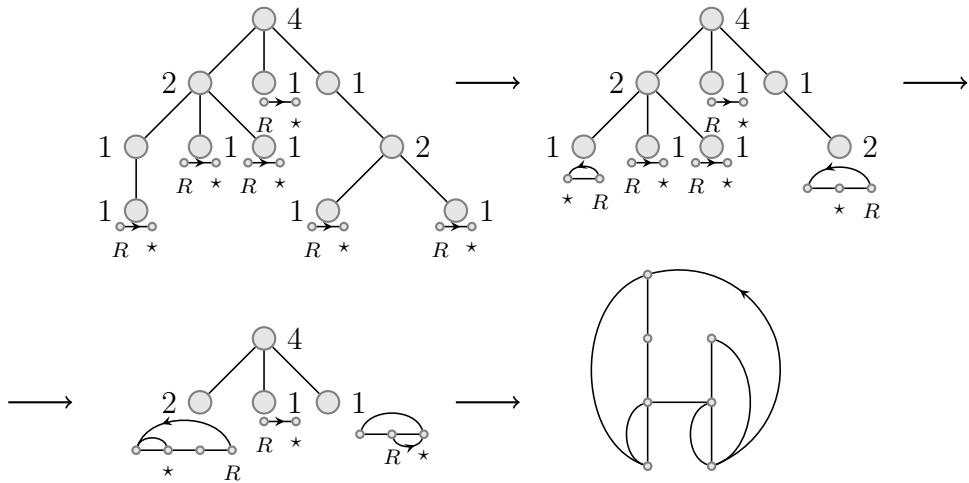
\begin{figure}[ht]
\begin{center}
\begin{tikzpicture}[scale = 0.85,
    place/.style = {circle,draw = black!50,fill = gray!20,thick,inner sep=3pt},
    mini/.style = {circle,draw = black!50,fill = gray!20,thick,inner sep=1pt},
    auto,
    decoration={
    markings,%  switch on markings
    mark=at position 0.7 with {\arrow[black]{stealth};}}]
    
 % arrow between
 \draw [->, thick] (3,2) to (4,2);

 % nodes
 \node [place] (r) at (0,3)  [label = right:$4$]     {};
 \node [place] (1lv1) at (-1,2)  [label = left:$2$]      {};
 \node [place] (1lv2) at (0,2)  [label = right:$1$]      {};
 \node [place] (1lv3) at (1,2) [label = right:$1$]       {};
 
 \node [place] (2lv1) at (-2,1)  [label = left:$1$]      {};
 \node [place] (2lv2) at (-1,1)  [label = right:$1$]       {};
 \node [place] (2lv3) at (0,1) [label = right:$1$]       {};
 
 \node [place] (2lv4) at (2,1) [label = right:$2$]       {};
 
 \node [place] (3lv1) at (-2,0) [label = left:$1$]       {};
 \node [place] (3lv2) at (1,0) [label = left:$1$]      {};
 \node [place] (3lv3) at (3,0) [label = right:$1$]       {};

 % edges
 \draw [-,semithick]      (r) to (1lv1);
 \draw [-,semithick]      (r) to (1lv2);
 \draw [-,semithick]      (r) to (1lv3);
 
 \draw [-,semithick]      (1lv1) to (2lv1);
 \draw [-,semithick]      (1lv1) to (2lv2);
 \draw [-,semithick]      (1lv1) to (2lv3);
 
 \draw [-,semithick]      (1lv3) to (2lv4);
 
 \draw [-,semithick]      (2lv1) to (3lv1);
 
 \draw [-,semithick]      (2lv4) to (3lv2);
 \draw [-,semithick]      (2lv4) to (3lv3);

 \foreach \x/\y in {0/1.7,-1.25/0.75,-0.25/0.75,-2.25/-0.25,0.75/-0.25,2.75/-0.25}
 {
  \begin{scope}[xshift = \x cm, yshift = \y cm]
  % nodes
  \node [mini] (a) at (0,0)  [label = below:$\scriptstyle{R}$]     {};
  \node [mini] (b) at (0.5,0)  [label = below:$\scriptstyle{\star}$]       {};
 
    % edges
  \draw [-,semithick, postaction={decorate}]      (a) to (b);
  \end{scope}
 }

%- SECOND PICTURE 
 \begin{scope}[xshift = 7cm]
 
 % arrow between
 \draw [->, thick] (3,2) to (4,2);
 
 \node [place] (r) at (0,3)  [label = right:$4$]     {};
 \node [place] (1lv1) at (-1,2)  [label = left:$2$]      {};
 \node [place] (1lv2) at (0,2)  [label = right:$1$]      {};
 \node [place] (1lv3) at (1,2) [label = right:$1$]       {};
 
 \node [place] (2lv1) at (-2,1)  [label = left:$1$]      {};
 \node [place] (2lv2) at (-1,1)  [label = right:$1$]       {};
 \node [place] (2lv3) at (0,1) [label = right:$1$]       {};
 
 \node [place] (2lv4) at (2,1) [label = right:$2$]       {};

 % edges
 \draw [-,semithick]      (r) to (1lv1);
 \draw [-,semithick]      (r) to (1lv2);
 \draw [-,semithick]      (r) to (1lv3);
 
 \draw [-,semithick]      (1lv1) to (2lv1);
 \draw [-,semithick]      (1lv1) to (2lv2);
 \draw [-,semithick]      (1lv1) to (2lv3);
 
 \draw [-,semithick]      (1lv3) to (2lv4);
 
 \begin{scope}[xshift = -2.25 cm, yshift = 0.5 cm]
 % nodes
 \node [mini] (a) at (0,0)  [label = below:$\scriptstyle{\star}$]     {};
 \node [mini] (b) at (0.5,0)  [label = below:$\scriptstyle{R}$]      {};
 
 % edges
 \draw [-,semithick]      (a) to (b);
 \draw [-,semithick, postaction={decorate}, bend right = 70]      (b) to (a);
 \end{scope}
 
 \begin{scope}[xshift = 1.5 cm, yshift = 0.35 cm]
 % nodes
 \node [mini] (a) at (0,0)      {};
 \node [mini] (b) at (0.5,0)  [label = below:$\scriptstyle{\star}$]      {};
 \node [mini] (c) at (1,0)  [label = below:$\scriptstyle{R}$]      {};
 
 % edges
 \draw [-,semithick]      (a) to (b);
 \draw [-,semithick]      (b) to (c);
 \draw [-,semithick, postaction={decorate}, bend right = 70]      (c) to (a);
 \end{scope}
 
 \foreach \x/\y in {0/1.7,-1.25/0.75,-0.25/0.75}
 {
  \begin{scope}[xshift = \x cm, yshift = \y cm]
  % nodes
  \node [mini] (a) at (0,0)  [label = below:$\scriptstyle{R}$]     {};
  \node [mini] (b) at (0.5,0)  [label = below:$\scriptstyle{\star}$]       {};
 
    % edges
  \draw [-,semithick, postaction={decorate}]      (a) to (b);
  \end{scope}
 }

 \end{scope}
 
%- THIRD PICTURE 
 \begin{scope}[yshift = -5cm]
 
 % arrow between
 \draw [->, thick] (-4,2) to (-3,2);
 
 % arrow between
 \draw [->, thick] (3,2) to (4,2);
 
 \node [place] (r) at (0,3)  [label = right:$4$]     {};
 \node [place] (1lv1) at (-1,2)  [label = left:$2$]      {};
 \node [place] (1lv2) at (0,2)  [label = right:$1$]      {};
 \node [place] (1lv3) at (1,2) [label = right:$1$]       {};
 
 % edges
 \draw [-,semithick]      (r) to (1lv1);
 \draw [-,semithick]      (r) to (1lv2);
 \draw [-,semithick]      (r) to (1lv3);
 
 \begin{scope}[xshift = -2 cm, yshift = 1.25 cm]
 % nodes
 \node [mini] (a) at (0,0)       {};
 \node [mini] (b) at (0.5,0)  [label = below:$\scriptstyle{\star}$]      {};
 \node [mini] (c) at (1,0)         {};
 \node [mini] (d) at (1.5,0)  [label = below:$\scriptstyle{R}$]      {};

 % edges
 \draw [-,semithick]      (a) to (b);
 \draw [-,semithick]      (b) to (c);
 \draw [-,semithick]      (c) to (d);
 \draw [-,semithick, bend left = 70]      (a) to (b);
 \draw [-,semithick, postaction={decorate}, bend right = 70]      (d) to (a);
 \end{scope}
 
 \begin{scope}[xshift = 1.5 cm, yshift = 1.5 cm]
 % nodes
 \node [mini] (a) at (0,0)      {};
 \node [mini] (b) at (0.5,0)  [label = below:$\scriptstyle{R}$]      {};
 \node [mini] (c) at (1,0)  [label = below:$\scriptstyle{\star}$]      {};
 
 % edges
 \draw [-,semithick]      (a) to (b);
 \draw [-,semithick]      (b) to (c);
 \draw [-,semithick, bend right = 70]       (c) to (a);
 \draw [-,semithick, postaction={decorate}, bend right = 70]      (b) to (c);
 \end{scope}
 
  \foreach \x/\y in {0/1.7}
 {
  \begin{scope}[xshift = \x cm, yshift = \y cm]
  % nodes
  \node [mini] (a) at (0,0)  [label = below:$\scriptstyle{R}$]     {};
  \node [mini] (b) at (0.5,0)  [label = below:$\scriptstyle{\star}$]       {};
 
    % edges
  \draw [-,semithick, postaction={decorate}]      (a) to (b);
  \end{scope}
 }

 \end{scope}
 
%- FOURTH PICTURE 
 \begin{scope}[xshift = 6cm, yshift = -4cm]

 % nodes
 \node [mini] (a) at (0,3)       {};
 \node [mini] (b) at (0,2)       {};
 \node [mini] (c) at (0,1)       {};
 \node [mini] (d) at (0,0)       {};
 
 \node [mini] (e) at (1,2)       {};
 \node [mini] (f) at (1,1)       {};
 \node [mini] (g) at (1,0)       {};

 % edges
 \draw [-,semithick]      (a) to (b);
 \draw [-,semithick]      (b) to (c);
 \draw [-,semithick]      (c) to (d);
 \draw [-,semithick, bend right = 70]       (c) to (d);
 \draw [-,semithick, bend left = 70]      (d) to (a);
 
 \draw [-,semithick]      (e) to (f);
 \draw [-,semithick]      (f) to (g);
 \draw [-,semithick]      (f) to (c);
 \draw [-,semithick, bend right = 70]       (f) to (g);
 \draw [-,semithick, bend right = 70]       (g) to (e);
 
 \draw [-,semithick, bend right = 42.5, postaction={decorate}]      (g) to (2,2) to (a);
 
 \end{scope}
 
\end{tikzpicture}
 \caption{An example of mapping a $\beta(1,0)$-tree to a rooted non-separable planar map}
 \label{bijection}
\end{center}
\end{figure}

Assuming all the children of an internal node labelled $i$ are assigned maps numbered $1$ to $m$ from left-to-right (with two vertices on the outer face of each map labelled with $R$ and a star), take the star vertex of map $j$ and glue it to the root vertex of map $j+1$, for $j=1,2,\ldots,m-1$, and make the star vertex of map $m$ the new root pointing at the $R$ vertex of map 1 (this requires adding a new edge, the new root edge). Finally, in the obtained map, mark by star the $i$-th vertex counting from (but not including) the root vertex in the counterclockwise direction on the outer face. The only difference in performing the operation for the root of the given tree is that there is no need to put a star (which would otherwise be placed next to the root vertex in clockwise direction). It is not difficult to verify the correspondence of the statistics listed in Table~\ref{table:stats}.

\begin{table}[htdp]
\caption{Statistics preserved by the bijection}
\begin{center}
\begin{tabular}{l|l}
maps & trees \\
\hline
\# edges & \# nodes \\
\# vertices & \# leaves $+1$ \\
\# faces & \# internal nodes $+1$ \\
root-face degree & root-label $+1$ \\
\end{tabular}
\end{center}
\label{table:stats}
\end{table}%

The reader can check his/her understanding of the bijection by looking at Figures~\ref{maps} and \ref{beta10}: the trees considered there correspond to the maps in the same order under the bijection.

\section{Primitive maps and primitive $\beta(1,0)$-trees} \label{sec:primmaps}

In this section we define \emph{primitive maps} as internal $2$-face-free maps. Note that the root-face is allowed to be a $2$-face. The second and the last maps in Figure~\ref{maps} are primitive and the map constructed in Figure~\ref{bijection} is non-primitive.
Any map can be constructed from a primitive map by adding the appropriate multiplicities of edges, thus creating $2$-faces.

A $\beta(1,0)$-tree is said to be \emph{primitive} if it corresponds to a primitive map under the bijection above.
The word ``primitive'' is chosen in analogy with \emph{primitive $(2+2)$-free posets} studied by Dukes et al.~\cite{DKRS}. We can mimic the steps in that paper and use the generating function for all rooted non-separable planar maps $A(x)$ provided above to enumerate primitive maps. Namely, let $P(x)=\sum_{n\geq 0}p_nx^n$ be the generating function in question, where $p_n$ is the number of primitive maps on $n$ edges. Then the following result holds.
    \begin{theorem}\label{enumPrimMaps}
    We have
    \[
    P(x)=A\left(\frac{x}{x+1}\right)=\frac{2(x+1)}{3x}\left(\hypergeom\left(\left[-\frac{2}{3}, -\frac{1}{3}\right],\left[\frac{1}{2}\right],\frac{27x}{4(x+1)}\right)-1\right),
    \]
    with the asymptotic estimate
    \[
    [x^n]P(x) \sim \frac{46}{729}\sqrt{\frac{23}{\pi n^5}} \left(\frac{23}{4}\right)^n.
    \]
    \end{theorem}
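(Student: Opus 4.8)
The plan is to mimic the treatment of primitive $(2+2)$-free posets in Dukes et al., as suggested above: first decompose an arbitrary map in terms of a primitive map, turn that decomposition into a substitution of generating functions to obtain $P(x)=A(x/(x+1))$, then deduce the hypergeometric form and the asymptotics.

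The combinatorial heart is an inflation/contraction correspondence. Two edges that bound an internal $2$-face are parallel, so in any map the edges split into maximal bundles of mutually parallel edges, each such bundle being a chain of internal $2$-faces; contracting every bundle to a single edge removes all internal $2$-faces and returns a primitive map. Conversely, from a primitive map one builds every map by choosing for each edge a multiplicity $k\ge 1$ and replacing that edge by $k$ parallel copies, which inserts $k-1$ internal $2$-faces. I would verify that inflation and contraction are mutually inverse, so that maps on $n$ edges correspond bijectively to a primitive map on $k$ edges together with a composition of $n$ into $k$ positive parts. Since a single edge is replaced by a nonempty sequence of parallel edges, this is precisely the substitution $x\mapsto x+x^2+x^3+\cdots=x/(1-x)$ applied edgewise, giving $A(x)=P(x/(1-x))$. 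As $x\mapsto x/(x+1)$ is the compositional inverse of $x\mapsto x/(1-x)$, solving for $P$ yields the first displayed identity.

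The second equality is then immediate: substituting $x\mapsto x/(x+1)$ into the hypergeometric expression for $A$ converts the prefactor $2/(3x)$ into $2(x+1)/(3x)$ and the argument $27x/4$ into $27x/(4(x+1))$, matching the claim with no work on the series itself. For the asymptotics I would apply singularity analysis. The map $x\mapsto x/(x+1)$ is analytic and increasing on $[0,\infty)$, so the dominant singularity of $P$ lies where $x/(x+1)$ equals the dominant singularity $4/27$ of $A$; solving $x/(x+1)=4/27$ gives $x_0=4/23$ and hence the growth rate $(23/4)^n$. The substitution is a local analytic isomorphism at $x_0$, so the square-root-type singularity is preserved: writing the singular part of $A$ as $c\,(1-w/w_0)^{3/2}$ with $w_0=4/27$ and using the exact identity $1-w/w_0=(1-x/x_0)/(x+1)$ for $w=x/(x+1)$, the singular part of $P$ near $x_0$ is $c\,(1+x_0)^{-3/2}(1-x/x_0)^{3/2}=c\,(23/27)^{3/2}(1-x/x_0)^{3/2}$. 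The transfer theorem $[x^n](1-x/x_0)^{3/2}\sim x_0^{-n}n^{-5/2}/\Gamma(-3/2)$ then gives the $n^{-5/2}$ decay, the constant being the singular coefficient of $A$ times $(23/27)^{3/2}$.

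The step I expect to be the real obstacle is establishing the inflation/contraction map as an honest bijection. The subtlety is the distinguished root and the convention that the root-face---unlike internal faces---may be a $2$-face; one has to check that inflating the root edge is compatible with this asymmetry and does not disturb the clean ``composition of $n$ into $k$ parts'' bookkeeping. Once $A(x)=P(x/(1-x))$ is secured, the remaining substitution and singularity analysis are routine.
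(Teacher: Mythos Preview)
Your proposal is correct and follows essentially the same route as the paper: obtain $A(x)=P\bigl(x/(1-x)\bigr)$ from the fact that every map arises from a unique primitive map by independently choosing a positive multiplicity for each edge, invert to $P(x)=A\bigl(x/(x+1)\bigr)$, then locate the dominant singularity at the solution of $x/(x+1)=4/27$ and transfer the known singular expansion of $A$. Your write-up supplies more detail on both the inflation/contraction bijection and the local singularity computation than the paper does, but the underlying argument is the same.
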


\begin{proof} A primitive map having at least one edge gives rise to an infinite number of rooted non-separable planar maps by choosing multiplicities of edges, which can be recorded in terms of generating functions as follows:
\[
A(x)=\sum_{n\geq 0}p_n(x+x^2+\cdots)^n=\sum_{n\geq 0}p_n\left(\frac{x}{1-x}\right)^n=P\left(\frac{x}{1-x}\right).
\]
By substituting $x/(1-x)$ with $x$ we obtain the desired result. The dominant
singularity of $A$ is $\frac{4}{27}$, so the dominant singularity of the composition
is the solution of $\frac{x}{x+1} = \frac{4}{27}$, which
is $x = \frac{4}{23}$. Applying Theorem VI.4 and the expansion of $A$ given in
Example IX.42 in~\cite{FS} gives the claimed estimate.
\end{proof}

\begin{proposition}\label{aProp1}
A $\beta(1,0)$-tree is primitive if and only if it has
no node which has a single child with maximum label.
In fact, the number of internal $2$-faces in the corresponding map equals
the number of such nodes (not counting the root).
\end{proposition}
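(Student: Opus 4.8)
The plan is to follow the recursive construction of the map from the tree and to locate precisely where each internal $2$-face is born. The only edges ever \emph{added} during the construction are the ``new root edges'', one per internal node, since the gluing steps merely identify vertices. Hence every face of the final map other than the initial outer face is created by the insertion of exactly one new root edge, and I will show that such an insertion produces an internal $2$-face precisely when the node being processed has a unique child of maximum label.

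To make the local analysis work I would first set up an invariant for the intermediate maps. For a node $v$ write $M_v$ for the map built from the subtree at $v$, and let $d_v$ be the degree of its root-face (its outer face). I claim that $d_v=2$ when $v$ is a leaf and $d_v=1+\sum_{c}\ell(c)$ otherwise, the sum running over the children $c$ of $v$ with $\ell(c)$ the label of $c$; moreover, when $v$ is not the root its star sits at position $\ell(v)$ along the root-face, counted counterclockwise from the root. Both facts are proved by induction on the tree, the leaf case being the single edge ($d=2$, star at the unique non-root vertex). Granting them, the star-to-root arc of each $M_c$ has length $d_c-\ell(c)\ge 1$, with equality exactly when the star of $M_c$ is the last vertex before the root, i.e.\ when $\ell(c)=d_c-1$; for an internal $c$ this reads $\ell(c)=\sum_{c'}\ell(c')$ over the children $c'$ of $c$, and every leaf satisfies it. This is exactly the condition ``$c$ has maximum label''.

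Now I carry out the step at an internal node $v$ with children maps $M_{c_1},\dots,M_{c_m}$. Gluing the chain and inserting the new root edge from the star of $M_{c_m}$ to the root of $M_{c_1}$ closes the chain into a single block and splits the surrounding boundary into two faces: the new root-face, of degree $1+\sum_j \ell(c_j)=d_v$ (this matches the root-label-plus-one statistic of Table~\ref{table:stats} at the top level and pins down which side is the root-face), and one newly bounded face of degree $1+\sum_j\bigl(d_{c_j}-\ell(c_j)\bigr)$. Because each term $d_{c_j}-\ell(c_j)$ is at least $1$, this bounded face has degree at least $1+m$; it is a $2$-face if and only if $m=1$ and $d_{c_1}-\ell(c_1)=1$, that is, iff $v$ has a single child $c_1$ of maximum label. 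When $m\ge 2$ no $2$-face can appear this way, and when $v$ has a single child that is \emph{not} of maximum label the bounded face has degree $\ge 3$.

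Finally I assemble the count. Each internal node's step adds exactly one bounded face, and a face once bounded is never destroyed or subdivided by later steps (subsequent new root edges act only on the current outer boundary). Thus the internal $2$-faces of the final map are in bijection with the processing steps that create a bounded $2$-face, i.e.\ with the nodes $v$ possessing a unique child of maximum label; sending such a $v$ to that child identifies this set with the set of nodes that are themselves a single child of maximum label. Since the tree-root is never anybody's child it is excluded, which is the parenthetical ``not counting the root''; summing proves the enumerative claim, and the case of zero internal $2$-faces gives the stated characterization of primitive trees. The main obstacle is the inductive verification of the boundary invariant — in particular checking, from the planar embedding of the gluing in~\cite{CJS}, that a counterclockwise tour of the root-face of $M_v$ traverses exactly the root-to-star arcs of $M_{c_1},\dots,M_{c_m}$ in order, and that the root-face (rather than the bounded face) is the side of degree $1+\sum_j\ell(c_j)$; I would anchor the orientation using the known root-face degree of the completed map.
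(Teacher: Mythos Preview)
Your proof is correct and follows essentially the same approach as the paper's: both track the recursive construction and identify the step at which an internal $2$-face is created as precisely the step where a node has a single child of maximum label. Your version is more explicit---computing the degree of the newly enclosed face as $1+\sum_j(d_{c_j}-\ell(c_j))$ and invoking the root-face degree statistic from Table~\ref{table:stats} to pin down the invariant---which makes both directions and the enumerative claim fall out at once, whereas the paper argues the converse more informally (``clearly this must be where a node has a single child, since otherwise we would be adding an edge between two sub-maps having no common edges'').
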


\begin{proof}
Consider a node $v$ in a $\beta(1,0)$-tree which has a single child $u$ with maximum label. Because $u$ has maximum label, the map corresponding to the subtree rooted at $u$ has its star vertex and root vertex separated by a single edge. Now, when we construct the map corresponding to the subtree rooted at $v$ we simply add a new edge from the star vertex to the root vertex of the previous map. These two vertices were already connected so the new edge completes an internal $2$-face. Now consider the step in the bijection between a tree and a map when an internal $2$-face is formed. Clearly this must be where a node $v$ has a single child $u$, since otherwise we would be adding an edge between two sub-maps having no common edges. Furthermore, for a $2$-face to form we must have had the star vertex and the root vertex in the sub-map at $u$ adjacent, but this requires the node $u$ to have maximum label.
\end{proof}

%\begin{remark} \label{bijection-remark} We note that through the bijection, multiple edges in a map do not necessarily correspond to a single connected (in graph theoretical sense) place in the corresponding $\beta(1,0)$-tree. \end{remark}

\section{Maps restricted in certain ways} \label{sec:mapsrestr}

In this section we explore $k$-face-free maps and multiple-edge-free maps.

\subsection{A generating function for $2$-face-free maps}\label{kfafm}
It is easily seen that a primitive map is $2$-face free if and only if the corresponding
$\beta(1,0)$-tree does not have root label $1$. Equivalently, the tree is \emph{decomposable}, that is the root has at least two children. Therefore the generating function for $2$-face-free maps is
\begin{align*}
P'(x) &= P(x)-xP(x) \\
&= \frac{2(1-x^2)}{3x}\left(\hypergeom\left(\left[-\frac{2}{3}, -\frac{1}{3}\right],\left[\frac{1}{2}\right],\frac{27x}{4(x+1)}\right)-1\right).
\end{align*}
The forbidden subtrees that define $2$-face-free maps are shown in Figure~\ref{fig:2-ff}.

\begin{figure}[ht]
\begin{center}
\begin{tikzpicture}[scale = 0.85,
    place/.style = {circle,draw = black!50,fill = gray!20,thick,inner sep=3pt},
    auto]

 % nodes
 \node           at (0,1.75) {$\vdots$};
 \node [place] (r1) at (0,1) {};
 \node [place] (n1) at (0,0) [label = right:${\scriptstyle\mathrm{max}}$] {};
 
 \node [place] (r2) at (3,1) [label = right:${\scriptstyle 1}$] {};
 \node [place] (n2) at (3,0) [label = right:${\scriptstyle 1}$] {};
 
 % edges
 \draw [-,semithick]      (r1) to (n1);
 \draw [-,semithick]      (r2) to (n2);
 
 % triangles
 \draw [-,semithick]      (n1) to (0.75,-1) to (-0.75,-1) to (n1);
 \draw [-,semithick]      (n2) to (3.75,-1) to (2.25,-1) to (n2);
 
\end{tikzpicture}
 \caption{Forbidden subtrees defining $\beta(1,0)$-trees corresponding to $2$-face-free maps}
 \label{fig:2-ff}
\end{center}
\end{figure}
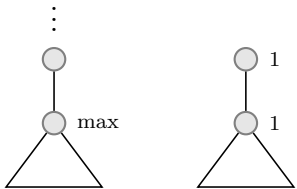

There is a natural extension, shown in Figure~\ref{fig:3-ff}, of these forbidden subtrees that can be used to characterize $\beta(1,0)$-trees corresponding to $3$-face-free
maps. A justification of the extension is straightforward and it is based on the bijection between $\beta(1,0)$-trees  and planar maps (similarly to the proof of Proposition~\ref{aProp1}). Indeed, thinking of the recursive way to build the bijection, one needs to make sure that adding a new edge on planar maps does not create an internal $3$-face, and also, at the very end, the outer face cannot be a triangle.  The two left-most subtrees in Figure~\ref{fig:3-ff} guarantee that no internal $3$-face emerges (while passing from the nodes containing the ``max" symbol to their parents), while the two right-most (sub)trees guarantee that the outer-face is not a triangle. 

\begin{figure}[ht]
\begin{center}
\begin{tikzpicture}[scale = 0.85,
    place/.style = {circle,draw = black!50,fill = gray!20,thick,inner sep=3pt},
    auto]

 % nodes
 \node           at (0,1.75) {$\vdots$};
 \node [place] (r1) at (0,1) {};
 \node [place] (n1) at (0,0) [label = right:${\scriptstyle\mathrm{max}-1}$] {};
 
 % nodes
 \node           at (4,1.75) {$\vdots$};
 \node [place] (r2) at (4,1) {};
 \node [place] (n2) at (3,0) [label = right:${\scriptstyle\mathrm{max}}$] {};
 \node [place] (n22) at (5,0) [label = right:${\scriptstyle\mathrm{max}}$] {};
 
 % edges
 \draw [-,semithick]      (r1) to (n1);
 \draw [-,semithick]      (r2) to (n2);
 \draw [-,semithick]      (r2) to (n22);
 
 % triangles
 \draw [-,semithick]      (n1) to (0.75,-1) to (-0.75,-1) to (n1);
 \draw [-,semithick]      (n2) to (3.75,-1) to (2.25,-1) to (n2);
 \draw [-,semithick]      (n22) to (5.75,-1) to (4.25,-1) to (n22);
 
 \begin{scope}[xshift = 8cm]
  % nodes
 \node [place] (r1) at (0,1) [label = right:${\scriptstyle 2}$] {};
 \node [place] (n1) at (0,0) [label = right:${\scriptstyle 2}$] {};
 
 % nodes
 \node [place] (r2) at (4,1) [label = right:${\scriptstyle 2}$] {};
 \node [place] (n2) at (3,0) [label = right:${\scriptstyle 1}$] {};
 \node [place] (n22) at (5,0) [label = right:${\scriptstyle 1}$] {};
 
 % edges
 \draw [-,semithick]      (r1) to (n1);
 \draw [-,semithick]      (r2) to (n2);
 \draw [-,semithick]      (r2) to (n22);
 
 % triangles
 \draw [-,semithick]      (n1) to (0.75,-1) to (-0.75,-1) to (n1);
 \draw [-,semithick]      (n2) to (3.75,-1) to (2.25,-1) to (n2);
 \draw [-,semithick]      (n22) to (5.75,-1) to (4.25,-1) to (n22);
 \end{scope}
 
\end{tikzpicture}
 \caption{Forbidden subtrees defining $\beta(1,0)$-trees corresponding to $3$-face-free maps}
 \label{fig:3-ff}
\end{center}
\end{figure}
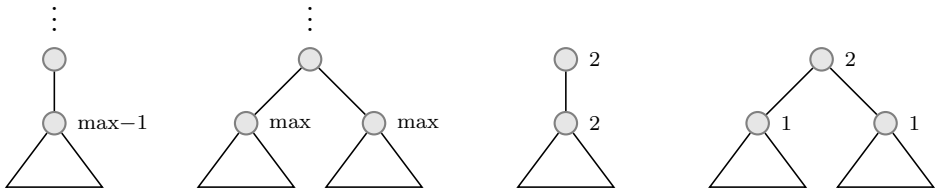

Similarly, one can list forbidden subtrees corresponding to  the $4$-face-free case, which is shown in Figure~\ref{fig:4-ff}. Moreover, based on the cases $k=2,3,4$, it is not difficult to see how to characterize the forbidden subtrees giving $k$-face-free maps: it contains all possible combinations for a node having $m$, $1\leq m\leq k-1$, children with the sum of their labels $k-m-1$ less than the maximum possible sum, and  all $\beta(1,0)$-trees with root labels equal $k$. 

\begin{figure}[ht]
\begin{center}
\begin{tikzpicture}[scale = 0.85,
    place/.style = {circle,draw = black!50,fill = gray!20,thick,inner sep=3pt},
    auto]

 % nodes
 \node           at (3,1.75) {$\vdots$};
 \node [place] (r1) at (3,1) {};
 \node [place] (n1) at (3,0) [label = right:${\scriptstyle\mathrm{max}-2}$] {};
 
 % nodes
 \node           at (6,1.75) {$\vdots$};
 \node [place] (r2) at (6,1) {};
 \node [place] (n2) at (5,0) [label = right:${\scriptstyle\mathrm{max}}$] {};
 \node [place] (n22) at (7,0) [label = right:${\scriptstyle\mathrm{max}-1}$] {};
 
 % nodes
 \node           at (10,1.75) {$\vdots$};
 \node [place] (r3) at (10,1) {};
 \node [place] (n3) at (9,0) [label = right:${\scriptstyle\mathrm{max}-1}$] {};
 \node [place] (n33) at (11,0) [label = right:${\scriptstyle\mathrm{max}}$] {};
 
 % nodes
 \node           at (15,1.75) {$\vdots$};
 \node [place] (r4) at (15,1) {};
 \node [place] (n4) at (13,0) [label = right:${\scriptstyle\mathrm{max}}$] {};
 \node [place] (n44) at (15,0) [label = right:${\scriptstyle\mathrm{max}}$] {};
 \node [place] (n444) at (17,0) [label = right:${\scriptstyle\mathrm{max}}$] {};
 
 % edges
 \draw [-,semithick]      (r1) to (n1);
 \draw [-,semithick]      (r2) to (n2);
 \draw [-,semithick]      (r2) to (n22);
 \draw [-,semithick]      (r3) to (n3);
 \draw [-,semithick]      (r3) to (n33);
 \draw [-,semithick]      (r4) to (n4);
 \draw [-,semithick]      (r4) to (n44);
 \draw [-,semithick]      (r4) to (n444);
 
 % triangles
 \draw [-,semithick]      (n1) to (3.75,-1) to (2.25,-1) to (n1);
 \draw [-,semithick]      (n2) to (5.75,-1) to (4.25,-1) to (n2);
 \draw [-,semithick]      (n22) to (7.75,-1) to (6.25,-1) to (n22);
 \draw [-,semithick]      (n3) to (9.75,-1) to (8.25,-1) to (n3);
 \draw [-,semithick]      (n33) to (11.75,-1) to (10.25,-1) to (n33);
 \draw [-,semithick]      (n4) to (13.75,-1) to (12.25,-1) to (n4);
 \draw [-,semithick]      (n44) to (15.75,-1) to (14.25,-1) to (n44);
 \draw [-,semithick]      (n444) to (17.75,-1) to (16.25,-1) to (n444);
 
 \begin{scope}[yshift = -3.5cm]
 % nodes
 \node [place] (r1) at (3,1) [label = right:${\scriptstyle 3}$] {};
 \node [place] (n1) at (3,0) [label = right:${\scriptstyle 3}$] {};
 
 % nodes
 \node [place] (r2) at (6,1) [label = right:${\scriptstyle 3}$] {};
 \node [place] (n2) at (5,0) [label = right:${\scriptstyle 1}$] {};
 \node [place] (n22) at (7,0) [label = right:${\scriptstyle 2}$] {};
 
 % nodes
 \node [place] (r3) at (10,1) [label = right:${\scriptstyle 3}$] {};
 \node [place] (n3) at (9,0) [label = right:${\scriptstyle 2}$] {};
 \node [place] (n33) at (11,0) [label = right:${\scriptstyle 1}$] {};
 
 % nodes
 \node [place] (r4) at (15,1) [label = right:${\scriptstyle 3}$] {};
 \node [place] (n4) at (13,0) [label = right:${\scriptstyle 1}$] {};
 \node [place] (n44) at (15,0) [label = right:${\scriptstyle 1}$] {};
 \node [place] (n444) at (17,0) [label = right:${\scriptstyle 1}$] {};
 
 % edges
 \draw [-,semithick]      (r1) to (n1);
 \draw [-,semithick]      (r2) to (n2);
 \draw [-,semithick]      (r2) to (n22);
 \draw [-,semithick]      (r3) to (n3);
 \draw [-,semithick]      (r3) to (n33);
 \draw [-,semithick]      (r4) to (n4);
 \draw [-,semithick]      (r4) to (n44);
 \draw [-,semithick]      (r4) to (n444);
 
 % triangles
 \draw [-,semithick]      (n1) to (3.75,-1) to (2.25,-1) to (n1);
 \draw [-,semithick]      (n2) to (5.75,-1) to (4.25,-1) to (n2);
 \draw [-,semithick]      (n22) to (7.75,-1) to (6.25,-1) to (n22);
 \draw [-,semithick]      (n3) to (9.75,-1) to (8.25,-1) to (n3);
 \draw [-,semithick]      (n33) to (11.75,-1) to (10.25,-1) to (n33);
 \draw [-,semithick]      (n4) to (13.75,-1) to (12.25,-1) to (n4);
 \draw [-,semithick]      (n44) to (15.75,-1) to (14.25,-1) to (n44);
 \draw [-,semithick]      (n444) to (17.75,-1) to (16.25,-1) to (n444);
 \end{scope}
 
\end{tikzpicture}
 \caption{Forbidden subtrees defining $\beta(1,0)$-trees corresponding to $4$-face-free maps}
 \label{fig:4-ff}
\end{center}
\end{figure}
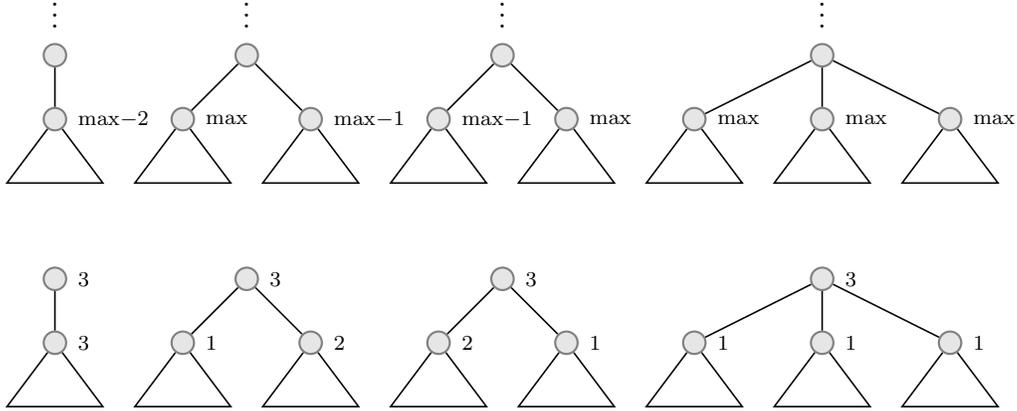

\subsection{Multiple-edge-free maps}

The goal of this subsection is to derive bounds on the number of multiple-edge-free
maps.

Note that $2$-face-free maps can have multiple edges, as illustrated by the map in Figure~\ref{2ff-mult}. However, being $2$-face-free is a necessary condition for a map to be multiple-edge-free, and thus the forbidden subtrees in Figure~\ref{fig:2-ff} are necessary conditions for a $\beta(1,0)$-tree to correspond to a multiple-edge-free map. Thus the generating function for $2$-face-free maps provides an upper bound to the number of multiple-edge-free maps. An asymptotic estimate of the
coefficients can be obtained from Theorem~\ref{enumPrimMaps}, giving
\[
[x^n]P'(x) \sim \frac{529}{1458}\sqrt{\frac{23}{\pi n^3}} \left(\frac{23}{4}\right)^n
\]
These necessary conditions can be extended by one more forbidden structure when an internal node has a single child whose label is 1. Indeed, in this case, when adding in the map the directed edge corresponding to such an internal node with the beginning at the starred vertex and the end at the previous root, we would be creating a multiple edge. Thus we have three forbidden structures giving necessary (but not sufficient) conditions on $\beta(1,0)$-trees to correspond to multiple-edge-free maps.

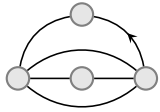
\begin{figure}[ht]
\begin{center}
\begin{tikzpicture}[scale = 0.85, place/.style = {circle,draw = black!50,fill = gray!20,thick,inner sep=3pt}, auto,
  decoration={
    markings,%  switch on markings
    mark=at position 0.5 with {\arrow[black]{stealth};}}
    ]
 
  % nodes
 \node [place] (L) at (180:1) {};
 \node [place] (M) at (0,0)   {};
 \node [place] (R) at (0:1)   {};
 \node [place] (T) at (90:1)  {};
 
 % edges
 \draw [-,semithick, bend left=40]                          (L) to (R);
 \draw [-,semithick]                                        (L) to (M);
 \draw [-,semithick]                                        (M) to (R);
 \draw [-,semithick, postaction={decorate}, bend right=35]  (R) to (T);
 \draw [-,semithick, bend right=35]                         (T) to (L);
 \draw [-,semithick, bend right=40]                         (L) to (R);
 
\end{tikzpicture}
 \caption{A $2$-face free map with multiple edges}
 \label{2ff-mult}
\end{center}
\end{figure}

On the other hand, Lemma~\ref{lemmaSuf} below gives a sufficient condition for a map to be multiple-edge-free.

\begin{lemma}\label{lemmaSuf}
If each internal node in a $\beta(1,0)$-tree has a sibling then the corresponding map is multiple-edge-free.
\end{lemma}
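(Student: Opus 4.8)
The plan is to induct along the recursive construction of the map from the tree, proving the natural inductive statement that \emph{every} intermediate map $M_v$ attached to a node $v$ is multiple-edge-free. The base case is a leaf, whose map is a single edge and so is trivially simple. For the inductive step I would fix an internal node $v$, assume that the maps $M_1,\dots,M_m$ attached to its children from left to right are already simple, and then locate every place where a repeated edge could be born as they are assembled into $M_v$.

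Only two kinds of operation occur at $v$: the gluings identifying the star of $M_j$ with the root of $M_{j+1}$, and the addition of the one new root edge from the star of $M_m$ to the root of $M_1$. I would first dispose of the gluings. The merged vertex $x=\star(M_j)=R(M_{j+1})$ collects its incident edges from two pieces that meet only in $x$, so a repeated edge at $x$ would require a common neighbour of $\star(M_j)$ in $M_j$ and of $R(M_{j+1})$ in $M_{j+1}$; the only vertex the two pieces share is $x$ itself, and an edge from $x$ to $x$ is a loop, which these maps do not possess. Hence the sole candidate for a repeated edge is the new root edge.

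The heart of the matter is that when $v$ has at least two children this new edge cannot duplicate an existing one. Its endpoints are $R(M_1)$ and $\star(M_m)$, and since in each piece the star sits at least one step counterclockwise from the root it is distinct from that root; thus for $m\ge 2$ both endpoints are interior to their own pieces and, as distinct children's submaps are vertex-disjoint and consecutive ones meet only in their glue vertex, $R(M_1)$ and $\star(M_m)$ are distinct and joined by no edge of any $M_i$. So the new edge is genuinely new and $M_v$ stays simple. To finish I would feed in the hypothesis in the form that no node other than the root is an only child, so that every internal node is assembled from two or more children and the single-child case never arises; by Proposition~\ref{aProp1}, together with the observation that a single child of label $1$ forces a doubled edge, single-child assembly is the \emph{only} way a repeated edge is created, so forbidding it forbids multiple edges outright.

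The steps I expect to fight with are two. The technical one is to be sure that the chain of gluings carried out all the way up the tree never turns two already-present, non-parallel edges into a multiple edge as vertices are identified; this reduces to the disjointness of distinct children's submaps, which is exactly what guarantees the non-adjacency of $R(M_1)$ and $\star(M_m)$ at every stage. The conceptual one is to apply the sibling condition correctly, since it must exclude \emph{every} only child and not merely the internal ones: an only child that happens to be a leaf has label $1$, which for a leaf is also its maximal label, so by Proposition~\ref{aProp1} its parent still produces an internal $2$-face and hence a multiple edge. Getting the hypothesis into the form that rules out all only children, and then propagating simplicity cleanly through the induction, is the crux of the argument.
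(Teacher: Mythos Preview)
Your argument is the paper's own, spelled out: the paper's two-line proof says that in the recursive build ``edges are only added between vertices that before were only connected by paths going through a cut vertex'', which is precisely your observation that for $m\ge 2$ the new root edge joins $R(M_1)$ and $\star(M_m)$ sitting in distinct child submaps glued only at cut vertices, while the gluings themselves identify a single vertex and so cannot manufacture parallels. Your flag on the hypothesis is apt---the paper's proof itself slips into ``each node has a sibling'' rather than ``each internal node'', confirming your reading that only-child leaves must also be excluded; the appeal to Proposition~\ref{aProp1} at the end is unnecessary once your direct $m\ge 2$ argument is in place.
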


\begin{proof}
Following the recursive way to build the map corresponding to a $\beta(1,0)$-tree where each node has a sibling, we see that edges are only added between vertices that before were only connected by paths going through a cut vertex. Thus, the edge $(u,v)$ cannot be a multiple edge.
\end{proof}

Based on the sufficient condition in Lemma~\ref{lemmaSuf}, one can get a rough lower bound on the number of multiple-edge-free maps. Indeed, assume that all labels, possibly except for the root, are 1. Then we can erase all the labels and use the known generating function for rooted trees where each node has a sibling (see~\cite[A005043]{OEIS}):
\[
B_1(x) = \frac{1+x-\sqrt{1-2x-3x^2}}{2(1+x)}.
\]
Analyzing the singularities of this function and applying Theorem VI.4 in~\cite{FS}
we find that the coefficients are approximately
\[
[x^n]B_1(x) = \frac{1}{8}\sqrt{\frac{3}{\pi n^3}} 3^n + O(3^n n^{-2}).
\]
The difference between the approximation and the actual coefficient of the
generating function is within $0.1\%$ on the $1000$th term.

We can improve the lower bound above by deriving an explicit generating function for the number of $\beta(1,0)$-trees where each internal node has a sibling and all labels, possibly except for the root, are at most 2 (such trees still correspond to multiple-edge-free maps). Let $B_2(x)$ be the generating function for these trees ($x$ is responsible for the number of nodes; we define the coefficient of $x^0$ corresponding to the empty tree to be 0). Then $B_2=B_2(x)$ satisfies the equation:
\[
B_2 = x + B_2(B_2-x) + (B_2-x)(B_2-x) + x( B_2 + (B_2-x))^2.
\]
This can be seen by considering what a tree of this kind can be (see Figure~\ref{fig:lab2}) and using the basic generating functions techniques. Indeed, either a tree is a single node (corresponding to $x$) or its root has degree 2 or more. When the degree of the root is more than 2 (see the middle two trees in Figure~\ref{fig:lab2}), the root's leftmost child, say $u$, has either label 1 or 2. In the first case, the generating function for the number of such trees is given by  $B_2(B_2-x)$ since for the subtree starting at $u$ we can pick any valid $\beta(1,0)$-tree and rewrite its root to be 1 (this gives the factor of $B_2$), and independently, the remaining part of the tree must be a valid $\beta(1,0)$-tree different from a single node (otherwise $u$ would have no sibling; this gives the factor of $B_2-x$). In the second case, the generating function is given by
$(B_2-x)(B_2-x)$ since for the subtree starting at $u$ we now must pick a valid $\beta(1,0)$-tree different from a single node. Finally, if the root's degree is 2 (the rightmost tree in Figure~\ref{fig:lab2}) then its children, independently from each other, can either have label 1 or 2. In the case when the label is 1, the number of choices for the corresponding subtree is given by the generating function $B_2$, while in the case when the label is 2, we have that our choices are given by $B_2-x$ (the single node tree must be excluded, since it will not give the root label 2, while any other legal tree has root label at least 2 which can be rewritten to be 2). Thus, for each of the two subtrees we have independently $B_2+(B_2-x)$ choices, and we add the factor of $x$ to take into account the tree's root.  

We can easily solve the functional equation, which gives
\[
B_2(x) = \frac{1+3x+4x^2 - \sqrt{1-2x-7x^2}}{4+8x}.
\]
Again we can apply Theorem VI.4 from~\cite{FS} to see that the coefficients
are approximately
\[
[x^n]B_2(x) = \frac{1}{8\sqrt{2}+12}\sqrt{\frac{4+\sqrt{2}}{\pi n^3}} \left(\frac{7}{2\sqrt{2}-1}\right)^{n}
+ O\left(\left(\frac{7}{2\sqrt{2}-1}\right)^{n} n^{-2} \right)
\]
The difference between the approximation and the actual coefficient of $B_2$ is
within $0.1\%$ on the $1000$th term.
%The first few numbers that appear as coefficients (starting from the coefficient of $x$) are
%\[
%1, 0, 1, 1, 5, 11, 39, 113, 377, 1207, 4043, 13509, 45957, 157171, \dotsc .
%\]
\begin{figure}[ht]
\begin{center}
\begin{tikzpicture}[scale = 0.85,
    place/.style = {circle,draw = black!50,fill = gray!20,thick,inner sep=3pt},
    auto]

 % nodes
 \node [place] (r1) at (1,1) {};
 
 % nodes
 \node [place] (r2) at (4,1) {};
 \node [place] (n2) at (3,0) [label = right:${\scriptstyle 1}$] {};
 
 % nodes
 \node [place] (r3) at (8,1) {};
 \node [place] (n3) at (7,0) [label = right:${\scriptstyle 2}$] {};
 
 % edges
 \draw [-,semithick]      (r2) to (n2);
 \draw [-,semithick]      (r3) to (n3);
 
 % triangles
 \draw [-,semithick]      (n2) to (3.75,-1) to (2.25,-1) to (n2);
 \draw [-,semithick]      (n3) to (7.75,-1) to (6.25,-1) to (n3);
 \begin{scope}[xshift = 4cm, yshift = 1cm, rotate = 40]
 \draw [-,semithick]      (r2) to (0.75,-1.5) to (-0.75,-1.5) to (r2);
 \end{scope}
 \begin{scope}[xshift = 8cm, yshift = 1cm, rotate = 40]
 \draw [-,semithick]      (r3) to (0.75,-1.5) to (-0.75,-1.5) to (r3);
 \end{scope}
 
 % nodes
 \node [place] (r4) at (12,1) {};
 \node [place] (n4) at (11,0) [label = right:${\scriptstyle \{1, 2\}}$] {};
 \node [place] (n44) at (13,0)[label = right:${\scriptstyle \{1, 2\}}$] {};
 
 % edges
 \draw [-,semithick]      (r4) to (n4);
 \draw [-,semithick]      (r4) to (n44);
 
 % triangles
 \draw [-,semithick]      (n4) to (11.75,-1) to (10.25,-1) to (n4);
 \draw [-,semithick]      (n44) to (13.75,-1) to (12.25,-1) to (n44);
 
\end{tikzpicture}
 \caption{The structure of $\beta(1,0)$-trees with labels at most $2$, excluding the root}
 \label{fig:lab2}
\end{center}
\end{figure}
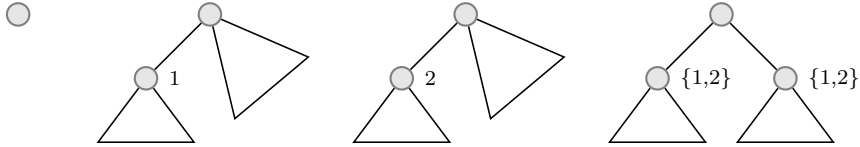
\newline

Even though it becomes quickly cumbersome and difficult to deal with, one can try to handle some larger cases to get a better lower bound for the number of multiple-edge-free maps. For example,  assume that all labels, possibly except for the root are at most 3 and let $B_3(x)$ be the generating function for these trees. Then looking at Figure~\ref{fig:lab3} and applying analysis similar to that applied to Figure~\ref{fig:lab2}, one can conclude that $B_3=B_3(x)$ satisfies the functional equation of degree $4$:
\begin{eqnarray*}
B_3 & = & x + B_3(B_3-x) + (B_3-x)(B_3-x) + (B_3-x-xB_3^2)(B_3-x)\\
    &   & +\ x(3B_3-2x-xB_3^2)^2 \\
    & = & (x + 2x^2 + 4x^3) - (1 + 5x + 12x^2)B_3 + (3 + 9x + x^2 + 4x^3)B_3^2\\
    &   & -\ (x + 6x^2)B_3^3 + x^3B_3^4.
\end{eqnarray*}
This equation can be solved to find a very complicated expression for $B_3$
and used to find the coefficients of the generating function.
The sequence we find starts with $1,0,1,1,5,13,48,160,578,2078$.
Alternatively, let $\phi(B_3)$ be a solution for $x$ from the functional equation. We can find the asymptotics of the coefficients of $B_3(x)$ by estimating the root of
$\phi(z) - z \cdot \phi'(z)$ closest to the origin in the complex plain.
Using the estimate $\tau = 0.28525$ and applying Theorem VIII.8
from~\cite{FS} we find the approximation
\[
[x^n]B_3(x) \sim \gamma \frac{\rho^n}{2\sqrt{\pi n^3}}, \quad \text{where } \rho = \frac{\phi(\tau)}{\tau} \approx 4.24121,\, \gamma = \sqrt{\frac{2\phi(\tau)}{\phi''(\tau)}} \approx 0.12347.
\]
The difference between the approximation and the actual coefficient of $B_3$ is
within $0.1\%$ on the $100$th term.

\begin{figure}[ht]
\begin{center}
\begin{tikzpicture}[scale = 0.85,
    place/.style = {circle,draw = black!50,fill = gray!20,thick,inner sep=3pt},
    auto]

 % nodes
 \node [place] (r1) at (1,1) {};
 
 % nodes
 \node [place] (r2) at (4,1) {};
 \node [place] (n2) at (3,0) [label = right:${\scriptstyle 1}$] {};
 
 % nodes
 \node [place] (r3) at (8,1) {};
 \node [place] (n3) at (7,0) [label = right:${\scriptstyle 2}$] {};
 
 % Structure 4
 \begin{scope}[xshift = 2.5cm, yshift = -2cm]
 % nodes
 \node [place] (r4) at (0,0) {};
 \node [place] (n4) at (-1,-1) [label = right:${\scriptstyle 3}$] {};

 % edges
 \draw [-,semithick]       (r4) to (n4);

 % triangles
 \draw [-,semithick]     (n4) to (-0.25,-2) to (-1.75,-2) to (n4);
 \begin{scope}[rotate = 40]
 \draw [-,semithick]      (r4) to (0.75,-1.5) to (-0.75,-1.5) to (r4);
 \end{scope}
 \end{scope}

 % Structure 5
  \begin{scope}[xshift = 6cm, yshift = -2cm]
 % nodes
 \node [place] (r5) at (0,0) {};
 \node [place] (n5) at (-1,-1) [label = right:${\scriptstyle \{1, 2, 3\}}$] {};
 \node [place] (n55) at (1,-1) [label = right:${\scriptstyle \{1, 2, 3\}}$] {};
 
 % edges
 \draw [-,semithick]      (r5) to (n5);
 \draw [-,semithick]      (r5) to (n55);
 
 % triangles
 \draw [-,semithick]      (n5) to (-0.25,-2) to (-1.75,-2) to (n5);
 \draw [-,semithick]      (n55) to (1.75,-2) to (0.25,-2) to (n55);
 \end{scope}

 % edges
 \draw [-,semithick]      (r2) to (n2);
 \draw [-,semithick]      (r3) to (n3);
 
 % triangles
 \draw [-,semithick]      (n2) to (3.75,-1) to (2.25,-1) to (n2);
 \draw [-,semithick]      (n3) to (7.75,-1) to (6.25,-1) to (n3);
 \begin{scope}[xshift = 4cm, yshift = 1cm, rotate = 40]
 \draw [-,semithick]      (r2) to (0.75,-1.5) to (-0.75,-1.5) to (r2);
 \end{scope}
 \begin{scope}[xshift = 8cm, yshift = 1cm, rotate = 40]
 \draw [-,semithick]      (r3) to (0.75,-1.5) to (-0.75,-1.5) to (r3);
 \end{scope}
 
\end{tikzpicture}
 \caption{The structure of $\beta(1,0)$-trees with labels at most $3$, excluding the root}
 \label{fig:lab3}
\end{center}
\end{figure}
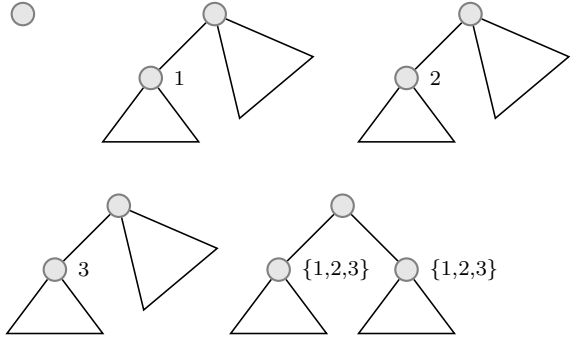

In summary we have found a lower bound for the number of multiple-edge-free maps
with asymptotic growth $(4.24121)^n$
and an upper bound with asymptotic growth $\left(\frac{23}{4}\right)^n = 5.75^n$.

\section{Connections with pattern avoidance in permutations} \label{sec:patts}

In this section we provide a link between our studies and pattern-avoiding permutations (see~\cite{S} for a comprehensive reference book on the area). We need the following definitions.

A permutation $\pi = \pi_1\pi_2\dotsm\pi_n$ is a bijection $\{1,\dotsc,n\} \to \{1,\dotsc,n\}$ that sends
$1$ to $\pi_1$, $2$ to $\pi_2$, $\dotsc$, $n$ to $\pi_n$. Given a string of distinct integers $v = v_1 v_2 \dotsm v_k$
we denote by $\fl(v)$, the \emph{flattening} of $v$, that is, the string where the smallest letter in $v$ has been replaced with $1$, the second smallest letter
replaced with $2$, etc. We say that $\pi$ \emph{contains} another permutation $p = p_1 p_2 \dotsm p_k$ if there are indices
$1 \leq i_1 < i_2 < \dotsb < i_k \leq n$ such that $\fl(\pi_{i_1}\pi_{i_2}\dotsm\pi_{i_k}) = p$. In this context we call $p$ a \emph{classical pattern}.
If $\pi$ does not contain $p$ we say that $\pi$ \emph{avoids} $p$. For example, $32541$ avoids $3142$, while $462531$ contains
one occurrence of the pattern $3142$, namely, the subsequence $4253$.

We will need two generalizations of classical patterns here. The first is vincular patterns, introduced by Babson and Steingrimsson~\cite{BS} (where they were
called \emph{dashed patterns}). In a \emph{vincular} pattern two adjacent letters may or may not be underlined. Underlined letters
mean that the corresponding letters in an occurrence must be adjacent in the permutation. For example, the permutation $253164$
avoids the pattern $2\underline{41}3$, while $365241$ contains one occurrence of the pattern, namely, the subsequence $3524$.

The second generalization is \emph{mesh patterns}, introduced by Br{\"a}nd{\'e}n and Claesson~\cite{BC} to provide explicit expansions
for certain permutation statistics as, possibly infinite, linear combinations of classical permutation patterns. We will only
explain two particular mesh pattern here.
\[
M = \pattern{scale=1}{ 2 }{ 1/2, 2/1 }{1/0,1/1,1/2,2/1} \qquad
M' = \pattern{scale=1}{ 2 }{ 1/2, 2/1 }{1/0,1/1,1/2,2/1,0/2,2/2}
\]
A permutation $\pi$ contains the mesh pattern $M$ if there is an index $i$ such that $\pi_i>\pi_{i+1}$, with the additional
requirement that everything to the right of $\pi_{i+1}$ is either larger than $\pi_i$ or smaller than $\pi_{i+1}$. A permutation
$\pi$ contains the mesh pattern $M'$ if we additionally require that $\pi_{i}$ is the largest element in the permutation. We refer the reader to~\cite{BC} for the
general definition.

We let $\Av(3142,2\underline{41}3)$ denote the set of all permutations avoiding the patterns $3142$ and $2\underline{41}3$ simultaneously.
Claesson et al.~\cite{CKS} proved that permutations of length $n$ in this set are in one-to-one correspondence with $\beta(1,0)$-trees on $n$
edges, and thus with rooted non-separable planar maps on $n+1$ edges. This is the reason the permutations in $\Av(3142,2\underline{41}3)$ are
called the \emph{non-separable permutations}. Claesson et al.~\cite{CKS} actually provided two bijections between the permutations and the trees.
Here we will use the one discussed in the remark on p.~322 in~\cite{CKS}. This bijection is less powerful in terms of the statistics preserved,
but sufficient for our purposes.

The idea behind the bijection is that $\beta(1,0)$-trees and $(3142,2\underline{41}3)$-avoiding permutations can be generated iteratively in the same way, which will induce a bijection sending indecomposable objects to indecomposable ones and decomposable objects to decomposable ones.
This is illustrated by Figures~\ref{fig:indecomp_trees_to_decomp_tree}--\ref{fig:perm_to_indecomp_perm}. We skip justifying that all the steps work, in particular, that in dealing with permutations we do not create a prohibited pattern.

As was mentioned above, a $\beta(1,0)$-tree is indecomposable if the root has only one child, and decomposable if the root has more
than one child. The tree with a single node is neither decomposable nor indecomposable. A permutation $\pi_1\pi_2\cdots\pi_n$ is called \emph{indecomposable} if there is no $i$, $1<i\leq n$, such that $\pi_j<\pi_k$ for all $1\leq j<i\leq k\leq n$ (in other words, an indecomposable permutation does not have a place so that everything to the left of it is smaller than everything to the right of it). This property can be stated with mesh patterns: Indecomposable permutations are the avoiders of the mesh pattern below.
\[
\pattern{scale=1}{ 2 }{ 1/1, 2/2 }{0/1,0/2,1/1,1/2,2/0}
\]
The analogue of the one node tree is the empty permutation.

\noindent
{\bf Generating $\beta(1,0)$-trees on $n$ edges.} A decomposable tree on $n$-edges can be constructed by taking several indecomposable trees on a total of $n$ edges and gluing all their roots into a single node whose label is defined to be the sum of its children. This is shown schematically on three indecomposable trees $T_1$, $T_2$ and $T_3$ in Figure~\ref{fig:indecomp_trees_to_decomp_tree}.
\begin{figure}[ht]
\begin{center}
\begin{tikzpicture}[scale = 0.85,
    place/.style = {circle,draw = black!50,fill = gray!20,thick,inner sep=3pt},
    auto]

 % nodes
 \node [place] (r1) at (0,1) {};
 \node [place] (n1) at (0,0) {};
 
 \node [place] (r2) at (3,1) {};
 \node [place] (n2) at (3,0) {};
 
 \node [place] (r3) at (6,1) {};
 \node [place] (n3) at (6,0) {};
 
 % edges
 \draw [-,semithick]      (r1) to (n1);
 \draw [-,semithick]      (r2) to (n2);
 \draw [-,semithick]      (r3) to (n3);
 
 % triangles
 \draw [-,semithick]      (n1) to (0.75,-1) to (-0.75,-1) to (n1);
 \node at (0,-0.65) {$T_1$};
 
 \draw [-,semithick]      (n2) to (3.75,-1) to (2.25,-1) to (n2);
 \node at (3,-0.65) {$T_2$};
 
 \draw [-,semithick]      (n3) to (6.75,-1) to (5.25,-1) to (n3);
 \node at (6,-0.65) {$T_3$};
 
 % pluses between
 \node at (1.5,0.5) {$\oplus$};
 \node at (4.5,0.5) {$\oplus$};
 
 % arrow between
 \draw [->, thick] (7,0.5) to (8,0.5);
 
 \begin{scope}[xshift = 11cm]
 % nodes
 \node [place] (r) at (0,1) {};
 \node [place] (n1) at (-2,0) {};
 \node [place] (n2) at (0,0) {};
 \node [place] (n3) at (2,0) {};
 
 % edges
 \draw [-,semithick]      (r) to (n1);
 \draw [-,semithick]      (r) to (n2);
 \draw [-,semithick]      (r) to (n3);
 
 % triangles
 \draw [-,semithick]      (n1) to (-1.25,-1) to (-2.75,-1) to (n1);
 \node at (-2,-0.65) {$T_1$};
 
 \draw [-,semithick]      (n2) to (0.75,-1) to (-0.75,-1) to (n2);
 \node at (0,-0.65) {$T_2$};
 
 \draw [-,semithick]      (n3) to (2.75,-1) to (1.25,-1) to (n3);
 \node at (2,-0.65) {$T_3$};
 
 \end{scope}
 
\end{tikzpicture}
 \caption{Constructing a decomposable $\beta(1,0)$-tree from three indecomposable ones}
 \label{fig:indecomp_trees_to_decomp_tree}
\end{center}
\end{figure}
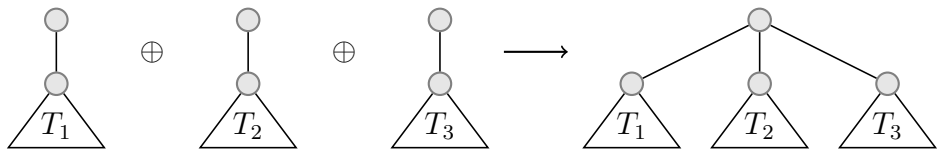

Similarly, an indecomposable tree on $n$ edges with root label $a$ can be obtained by extending a tree on $n-1$ edges with root label $k \geq a$,
by adding one edge as shown in Figure~\ref{fig:tree_to_indecomp_tree}.
\begin{figure}[ht]
\begin{center}
\begin{tikzpicture}[scale = 0.85,
    place/.style = {circle,draw = black!50,fill = gray!20,thick,inner sep=3pt},
    auto]

 % node
 \node [place] (r) at (0,1) [label = left:$k$] {};
 
 % triangle
 \draw [-,semithick]      (r) to (0.75,0) to (-0.75,0) to (r);
 
 % arrow between
 \draw [->, thick] (2,1) to (3,1);
 
 \begin{scope}[xshift = 5cm]
 % nodes
 \node [place] (r) at (0,1)  [label = left:$a$] {};
 \node [place] (nr) at (0,2) [label = left:$a$] {};
 
 % edge
 \draw [-,semithick]      (r) to (nr);
 
 % triangle
 \draw [-,semithick]      (r) to (0.75,0) to (-0.75,0) to (r); 
 \end{scope}
 
\end{tikzpicture}
 \caption{Constructing an indecomposable $\beta(1,0)$-tree. Here $1\leq a \leq k$}
 \label{fig:tree_to_indecomp_tree}
\end{center}
\end{figure}
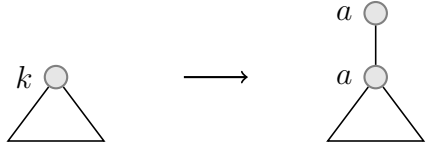

\noindent
{\bf Generating $(3142,2\underline{41}3)$-avoiding permutations of length $n$.} We mimic the steps in the generation of $\beta(1,0)$-trees above. A decomposable permutation of length $n$ can be obtained by taking several indecomposable $(3142,2\underline{41}3)$-avoiding permutations of total length $n$, placing them next to each other and increasing all the letters of the second permutation by the length of the first one, then increasing all the letters of the third permutation by the sum of the lengths of the first and second permutations, etc. This process is shown schematically on three permutations $A_1$, $A_2$ and $A_3$ in Figure~\ref{fig:indecomp_perms_to_decomp_perm}.
\begin{figure}[ht]
\begin{center}
\begin{tikzpicture}[scale = 0.9,
    bx/.style = {rectangle,draw = black!100,thick,inner sep=10pt},
    auto]

 % node
 \node [bx] (1) at (0,1) {$A_1$};
 \node [bx] (2) at (2,1) {$A_2$};
 \node [bx] (3) at (4,1) {$A_3$};
 
 % pluses between
 \node at (1,1) {$\oplus$};
 \node at (3,1) {$\oplus$};
 
 % arrow between
 \draw [->, thick] (5,1) to (6,1);
 
 \begin{scope}[xshift = 7cm]
 % nodes
 \node [bx] (1) at (0,1)      {$A_1$};
 \node [bx] (2) at (1.4,2.3)  {$A_2$};
 \node [bx] (3) at (2.8,3.6)  {$A_3$};
 \end{scope}
 
\end{tikzpicture}
 \caption{Constructing a decomposable $(3142,2\underline{41}3)$-avoiding permutation from indecomposable $(3142,2\underline{41}3)$-permutations}
 \label{fig:indecomp_perms_to_decomp_perm}
\end{center}
\end{figure}
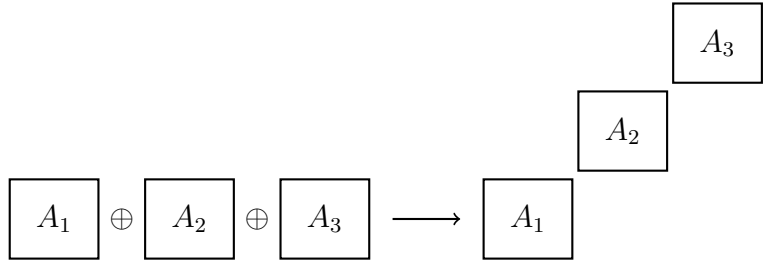

An element in a permutation is a \emph{left-to-right maximum} if there is nothing larger than it to the left of the letter, equivalently, this
element is an occurrence of the mesh pattern $\pattern{scale=0.75}{ 1 }{ 1/1 }{0/1}$.
To create an indecomposable permutation of length $n$, take any $(3142,2\underline{41}3)$-avoiding permutation of length $n-1$ with $k$ left-to-right maxima and first insert the letter $n$ right in front of a left-to-right maximum. In the resulting permutation, let $BnC$ be the right-most indecomposable factor, and let $A$ be the rest; see Figure~\ref{fig:perm_to_indecomp_perm}. While keeping the same relative order inside $A$, $B$ and $C$ we rearrange the factors, as shown in Figure~\ref{fig:perm_to_indecomp_perm}, to build the permutation $\tilde{B}\tilde{A}n\tilde{C}$ with any letter in $\tilde{A}$ larger than any letter in $\tilde{B}$ and $\tilde{C}$. The fact that $\tilde{B}\tilde{A}n\tilde{C}$ is an indecomposable $(3142,2\underline{41}3)$-avoiding permutation follows from Lemma 1 in~\cite{CKS}.
\begin{figure}[ht]
\begin{center}
\begin{tikzpicture}[scale = 0.9,
    bx/.style  = {rectangle,draw = black!100,thick,inner sep=10pt},
    elt/.style = {circle,draw = black!100,fill = black!100,thick,inner sep=2pt},
    auto]

 % node
 \node [elt] (lmx1)  at (0,0.5)     {};
 \node [elt]   (lmx2) at (0.75,1)    {};
 \node [elt]   (lmx3) at (1.75,1.25) {};
 \node [elt]   (lmx4) at (2.25,2)    {};
 
 % n
 \node [elt]   (n) at    (1.5,2.5) [label = above:{$n$}]  {};
  
 % boundary
 \draw [-,semithick] (0,0) to (lmx1) to (0.75,0.5) to (lmx2) to (1.75,1) to (lmx3) to (2.25,1.25) to (lmx4) to (3,2) to (3,0) to (0,0);
 
 % putting in n
 \draw [->,semithick] (n) to (1.5,1.5);
 
 % equals sign
 \node [] at (4,1) {$=$};
 
 \begin{scope}[xshift = 5.5cm]
 % boxes
 \node [bx] (A) at (0,0.55)      {$A$};
 \node [bx] (B) at (1.4,1.85)    {$B$};
 \node [bx] (C) at (2.8,1.85)    {$C$};
 
 % n
 \node [elt]   (n) at    (2.1,2.75) [label = above:{$n$}]  {};
 
 % dashed path
 \draw [-,dashed,semithick] (n) to (2.1,0);
 
 % arrow between
 \draw [->, thick] (3.85,1) to (4.35,1);
 \end{scope}
 
 \begin{scope}[xshift = 11cm]
 % boxes
 \node [bx] (B) at (0,0.55)      {$\tilde{B}$};
 \node [bx] (A) at (1.4,1.85)    {$\tilde{A}$};
 \node [bx] (C) at (2.8,0.55)    {$\tilde{C}$};
 
 % n
 \node [elt]   (n) at    (2.1,2.75) [label = above:{$n$}]  {};
 
 % dashed path
 \draw [-,dashed,semithick] (n) to (2.1,0);
 \end{scope}
 
\end{tikzpicture}
 \caption{Constructing an indecomposable $(3142,2\underline{41}3)$-avoiding permutation from any $(3142,2\underline{41}3)$-permutation}
 \label{fig:perm_to_indecomp_perm}
\end{center}
\end{figure}
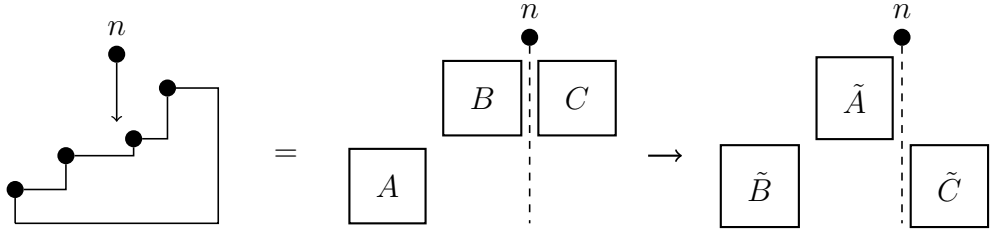

\begin{example}
Consider the permutation $12$. We have two possible locations in which to insert $n = 3$: in front of the $1$ or in front of the $2$. If we choose
the latter then $A = \{1\}$, $B = \emptyset$ and $C = \{2\}$. Thus we get the permutation $231$ out of the procedure. We can further enlarge this
permutation by adding a $4$ in front of the $2$ or the $3$. Choosing the former gives us $A = \emptyset$, $B = \emptyset$, $C = \{231\}$, and the resulting permutation is $4231$. We now only have one choice of extending: Placing $5$ in front of the $4$, so $A = \emptyset$, $B = \{4231\}$, $C = \emptyset$, producing $54231$. On trees the corresponding procedures are shown in Figure~\ref{fig:12example}.
\end{example}

\begin{figure}[ht]
\begin{center}
\begin{tikzpicture}[scale = 0.85, place/.style = {circle,draw = black!50,fill = gray!20,thick,inner sep=3pt}, auto]

 % nodes
 \node [place] (1) at (0,2)  [label = right:$2$]     {};
 \node [place] (2) at (-0.5,1)  [label = left:$1$]       {};
 \node [place] (3) at (0.5,1)  [label = right:$1$]       {};
 
 % edges
 \draw [-,semithick]      (1) to (2);
 \draw [-,semithick]      (1) to (3);
 
 % arrow between
 \draw [->, thick] (1.5,2) to (2.5,2);
 
 \begin{scope}[xshift = 4cm]
 % nodes
 \node [place] (0) at (0,3)  [label = right:$2$]     {};
 \node [place] (1) at (0,2)  [label = right:$2$]     {};
 \node [place] (2) at (-0.5,1)  [label = left:$1$]       {};
 \node [place] (3) at (0.5,1)  [label = right:$1$]       {};
 
 % edges
 \draw [-,semithick]      (0) to (1);
 \draw [-,semithick]      (1) to (2);
 \draw [-,semithick]      (1) to (3);
 
 % arrow between
 \draw [->, thick] (1.5,2) to (2.5,2);
 \end{scope}
 
 \begin{scope}[xshift = 8cm]
 % nodes
 \node [place] (00) at (0,4)  [label = right:$1$]     {};
 \node [place] (0) at (0,3)  [label = right:$1$]     {};
 \node [place] (1) at (0,2)  [label = right:$2$]     {};
 \node [place] (2) at (-0.5,1)  [label = left:$1$]       {};
 \node [place] (3) at (0.5,1)  [label = right:$1$]       {};
 
 % edges
 \draw [-,semithick]      (00) to (0);
 \draw [-,semithick]      (0) to (1);
 \draw [-,semithick]      (1) to (2);
 \draw [-,semithick]      (1) to (3);

 % arrow between
 \draw [->, thick] (1.5,2) to (2.5,2);
 \end{scope}

 \begin{scope}[xshift = 12cm]
 % nodes
 \node [place] (000) at (0,5)  [label = right:$1$]     {};
 \node [place] (00) at (0,4)  [label = right:$1$]     {};
 \node [place] (0) at (0,3)  [label = right:$1$]     {};
 \node [place] (1) at (0,2)  [label = right:$2$]     {};
 \node [place] (2) at (-0.5,1)  [label = left:$1$]       {};
 \node [place] (3) at (0.5,1)  [label = right:$1$]       {};
 
 % edges
 \draw [-,semithick]      (000) to (00);
 \draw [-,semithick]      (00) to (0);
 \draw [-,semithick]      (0) to (1);
 \draw [-,semithick]      (1) to (2);
 \draw [-,semithick]      (1) to (3);
 \end{scope}
 
\end{tikzpicture}
 \caption{The trees corresponding to the permutations $12$, $231$, $4231$ and $54231$}
 \label{fig:12example}
\end{center}
\end{figure}
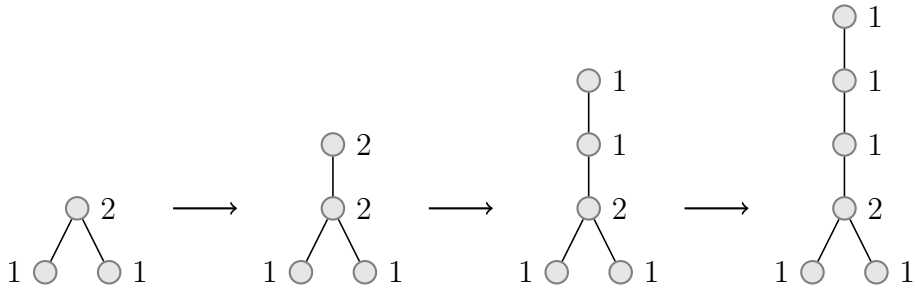

The main result in this section is the following theorem enumerating the set of permutations avoiding one classical, one vincular and one mesh pattern.

\begin{theorem}\label{thm:perms}
Primitive maps, and primitive $\beta(1,0)$-trees, are in one-to-one correspondence with permutations in $\Av(3142,2\underline{41}3)$ that avoid the mesh pattern $M$.
\end{theorem}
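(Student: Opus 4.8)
The plan is to derive the theorem from Proposition~\ref{aProp1} together with the step-by-step bijection between $\beta(1,0)$-trees and $\Av(3142,2\underline{41}3)$ recalled above. By Proposition~\ref{aProp1} a tree is primitive exactly when it has no node that is a single child of maximum label, so it is enough to prove that, under the bijection, a tree has such a node if and only if the corresponding permutation contains $M$. Since the bijection is defined by matching the two generation procedures, I would induct on the number of edges, treating the decomposable step (Figures~\ref{fig:indecomp_trees_to_decomp_tree} and~\ref{fig:indecomp_perms_to_decomp_perm}) and the indecomposable step (Figures~\ref{fig:tree_to_indecomp_tree} and~\ref{fig:perm_to_indecomp_perm}) separately, and showing that the two procedures create a forbidden node and an occurrence of $M$ at exactly the same moments.

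On the tree side the key observation is that a single-child-maximum-label node arises precisely from an indecomposable step that keeps the root label maximal. Extending a tree of root label $k$ to root label $a$ makes the old root the unique child of the new node and relabels it to $a$; as the old root's children still sum to $k$, that child has maximum label if and only if $a=k$, and only then is the new node of the forbidden type. A decomposable step merges roots and so produces no single-child node, and an indecomposable step with $a<k$ produces none either. Moreover, no later step can destroy an already created forbidden node, since the relevant local data (a node together with the subtree hanging from its only child) is untouched by subsequent extensions and gluings. Hence a tree possesses a single-child-maximum-label node if and only if at least one $a=k$ step occurs in its construction.

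On the permutation side I would show that these are exactly the steps creating an occurrence of $M$. An indecomposable step inserts the new largest letter $n$ in front of the $a$-th left-to-right maximum and then rearranges to $\tilde B\tilde A\,n\,\tilde C$; the case $a=k$ is insertion in front of the right-most (largest) left-to-right maximum. Writing $BnC$ for the right-most indecomposable factor before rearranging, that largest letter becomes the first entry of $C$, hence is the maximum of $C$, so after relabelling the first entry of $\tilde C$ is the maximum of $\tilde C$. Then $n$ sits immediately before $\tilde C$ and forms an adjacent descent with the first entry of $\tilde C$; everything to the right of that entry lies in $\tilde C$ and is therefore smaller than it, while nothing can exceed $n$, so the shaded middle band of $M$ stays empty and we get an occurrence of $M$. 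When $a<k$ the largest letter instead survives strictly to the right of the inserted descent and, being below $n$ but above the descent bottom, lands in that middle band, so no occurrence is created. The decomposable step is a direct sum, which creates no new adjacent descent (all later blocks sit above the earlier ones, giving ascents at the junctions) and cannot invalidate an existing occurrence, because the added higher blocks only contribute entries above the relevant descent top. Induction then gives the equivalence, which with Proposition~\ref{aProp1} proves the theorem.

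The hard part will be the permutation-side bookkeeping of the last paragraph: one must track the rearrangement $A\,B\,n\,C\mapsto\tilde B\tilde A\,n\,\tilde C$ carefully and check every shaded cell of the mesh pattern $M$, in particular verifying that the rearrangement never slips an entry into the middle band to the right of a newly created descent and never manufactures a spurious descent elsewhere. Controlling what is allowed to appear to the right of the new maximum is exactly where I would invoke Lemma~1 of~\cite{CKS}, the same result used there to guarantee that $\tilde B\tilde A\,n\,\tilde C$ is again a $(3142,2\underline{41}3)$-avoider. Some additional care is needed to set up the base of the induction consistently with Proposition~\ref{aProp1} for the smallest trees, where the $2$-face / root-face conventions are degenerate.
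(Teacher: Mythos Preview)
Your plan is correct and follows essentially the same route as the paper. The paper packages your key observation---that the indecomposable step with $a=k$ is exactly the step producing both a single-child-maximum-label node and a new occurrence of $M$---into a separate Lemma~\ref{lem:topmax}, phrased in terms of the auxiliary pattern $M'$ (an occurrence of $M$ whose top letter is the global maximum), and then argues, much as you do, that such an occurrence survives all later construction steps as an occurrence of $M$, and conversely that any occurrence of $M$ can be traced back through the construction to an $M'$ at some stage. Your explicit induction and the paper's ``find the relevant subtree / peel back the construction'' argument require checking exactly the same facts about the rearrangement $A\,B\,n\,C \mapsto \tilde B\,\tilde A\,n\,\tilde C$; you have correctly identified this bookkeeping as the substantive part.
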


This implies that the generating function for the set of permutations $\Av(3142,2\underline{41}3,M)$ is
\[
P(x)=\frac{2(x+1)}{3x}\left(\hypergeom\left(\left[-\frac{2}{3}, -\frac{1}{3}\right],\left[\frac{1}{2}\right],\frac{27x}{4(x+1)}\right)-1\right).
\]

To prove the theorem we first need a lemma.
\begin{lemma} \label{lem:topmax}
Let $T$ be an indecomposable tree. Then the label of the child of the root is the maximum possible if and only if $T$ corresponds to a permutation $\pi$ containing an occurrence of $M'$.
\end{lemma}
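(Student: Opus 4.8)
The plan is to follow the single recursive step that simultaneously builds $T$ and $\pi$, and to show that the extremal choice on the tree side is exactly the one that produces an occurrence of $M'$ on the permutation side. Since $T$ is indecomposable, it arises from a tree $T'$ on one fewer edge with root label $k$ by the step in Figure~\ref{fig:tree_to_indecomp_tree}: we attach a new root whose unique child is the old root of $T'$, relabelled $a$ with $1\le a\le k$. The child of the root of $T$ is this node, its label is $a$, and the largest value it may carry is the sum of its children's labels, which equals the old root label $k$. Hence \emph{the child of the root has the maximum possible label if and only if $a=k$}. On the permutation side, $\pi$ is obtained from $\pi'\leftrightarrow T'$ by inserting the new largest letter $n$ in front of one of the left-to-right maxima of $\pi'$ and rearranging as in Figure~\ref{fig:perm_to_indecomp_perm} into $\tilde B\tilde A n\tilde C$. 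The two generation procedures are aligned so that the choice of new root label $a$ is paired with inserting $n$ in front of the $j$-th left-to-right maximum of $\pi'$ (in left-to-right, i.e. increasing-value, order), with $a=j$; this matching is seen by checking that the insertion produces a permutation with exactly $j$ left-to-right maxima, matching the root label. Since the left-to-right maxima of $\pi'$ increase in value, the $k$-th one is the global maximum $n-1$, so $a=k$ holds if and only if $n$ is inserted directly in front of the global maximum of $\pi'$.

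Next I would pin down what $M'$ means concretely. Reading off the shaded cells of $M'$, and using that $M'$ forces $\pi_i$ to be the largest letter (as noted after the definition), $\pi$ contains $M'$ if and only if $n$ is not in the last position and the letter immediately following $n$ exceeds every letter to its right; equivalently, the letter right after $n$ is a right-to-left maximum of $\pi$. In the word $\pi=\tilde B\tilde A n\tilde C$ the letter immediately after $n$ is the first letter of $\tilde C$ (when $\tilde C\neq\emptyset$), and the letters to the right of $n$ are exactly the letters of $\tilde C$. Therefore $\pi$ contains $M'$ if and only if $\tilde C$ is nonempty and its first letter is its maximum.

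Then I would translate this back through the construction. Because the rearrangement keeps the relative order inside $A$, $B$ and $C$, the factor $\tilde C$ is order-isomorphic to $C$, and $C$ is precisely the part of the intermediate permutation $\sigma$ lying to the right of $n$, namely the suffix of $\pi'$ beginning at the left-to-right maximum $v_j$ in front of which $n$ was inserted. Thus the first letter of $C$ is $v_j$, and since $v_j$ already dominates everything before it in $\pi'$, it is the maximum of the suffix $C$ if and only if it also dominates everything after it, that is, if and only if $v_j$ is the global maximum of $\pi'$, i.e. $j=k$. Chaining the three steps, $\pi$ contains $M'$ iff the first letter of $\tilde C$ is its maximum, iff $v_j$ is the global maximum, iff $j=k$, iff $a=k$, iff the child of the root of $T$ has the maximum possible label.

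The step I expect to require the most care is the bookkeeping establishing $a=j$ (equivalently, that the root label is carried to the number of left-to-right maxima): one must verify that inserting $n$ in front of the $j$-th left-to-right maximum destroys exactly the maxima $v_j,\dots,v_k$ and creates one new maximum, and that the passage to $\tilde B\tilde A n\tilde C$ leaves this count unchanged. The genuinely degenerate case is the single-edge tree, corresponding to the one-letter permutation, where the child of the root is a leaf; here its label $1$ is not the sum of its children (which is $0$), so it does not have the maximum possible label, matching the fact that a one-letter permutation cannot contain $M'$. Every other indecomposable tree is produced by the recursive step above, so the equivalences of the previous paragraph complete the proof.
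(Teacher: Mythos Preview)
Your argument is correct and follows essentially the same route as the paper's proof: both unwind the single recursive step (Figure~\ref{fig:tree_to_indecomp_tree} on trees, Figure~\ref{fig:perm_to_indecomp_perm} on permutations) and identify ``child of the root has maximum label'' with ``$n$ was inserted immediately before the last left-to-right maximum $n-1$,'' which is exactly the condition for $n$ and its right neighbour to form an occurrence of $M'$. Your write-up is more explicit than the paper's---you spell out the pairing $a=j$ via the left-to-right-maxima count and the characterization of $M'$ in terms of the first letter of $\tilde C$---but the underlying idea is identical; the only quibble is your treatment of the single-edge tree, where declaring the leaf's ``maximum possible label'' to be $0$ is not the reading used in Proposition~\ref{aProp1}, though the paper's own proof of the lemma does not address this boundary case either.
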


\begin{proof}
Suppose the label of the child of the root is a maximum. This means that $n$ was put in right before the last left-to-right-maximum in the permutation. But the last left-to-right maximum is $n-1$ and this will yield the claimed occurrence. Now consider a permutation $\pi$ with an occurrence of $M'$. The right element of the pattern must be the largest element in $\tilde{C}$ (in Figure~\ref{fig:perm_to_indecomp_perm}), which also has to be the rightmost element, and it corresponds to $n-1$ in $C$. This implies that $n$ was put in right before the last left-to-right maximum which means that the child of the root was labeled with the maximum possible value.
\end{proof}

\begin{proof}[Proof of Theorem~\ref{thm:perms}]
Suppose the permutation $\pi$ corresponds to a map that is not primitive. Then the corresponding tree $T$ has at least one node that is a single child and has maximum label. Consider the subtree that contains this node and has the node parent as its root. This subtree is indecomposable and by Lemma~\ref{lem:topmax} corresponds to a permutation containing the pattern $M'$. As the entire tree $T$ is built out of this subtree and others, the occurrence of the pattern $M'$ is either maintained (in the case when the permutation becomes the rightmost component) or becomes an occurrence of $M$ (since the descent will remain a descent).

Now suppose that the permutation $\pi$ contains the pattern $M$ and consider a particular occurrence $xy$ of the pattern $M$. It must lie in a single indecomposable component of $\pi$. If $x$ is the largest letter in its component then the occurrence of $M$ is actually an occurrence of $M'$ inside the component (all the letters of $\pi$, if any, that are larger than $x$ can be ignored). If that is the case then this component corresponds to a tree that is not primitive by Lemma~\ref{lem:topmax}. This would then imply that the entire tree corresponding to $\pi$ is not primitive, completing the proof. However, if $x$ is not the largest letter in the component, we can reverse the construction of $\pi$ (removing largest elements and, possibly, components) until $x$ becomes the largest letter in its component. It then remains to note that reversing the construction maintains descents and does not introduce any elements to the right of a descent, whose size is between the sizes of the descent letters. Thus, $xy$ will correspond to an occurrence of $M'$ in its component at some step of constructing $\pi$, which,  by Lemma~\ref{lem:topmax}, corresponds to a non-primitive tree that will make the entire tree corresponding to $\pi$ non-primitive.
\end{proof}

\begin{lemma} \label{lem:occM}
Let $\pi$ be a permutation in $\Av(3142,2\underline{41}3)$ of length $n-1$ with $m$ occurrences of the mesh pattern $M$. Then inserting $n$ before the last right-to-left maximum in $\pi$ produces a permutation with $m+1$ occurrences of $M$. Inserting $n$ before any other right-to-left maximum produces a permutation with $m$ occurrences of $M$.
\end{lemma}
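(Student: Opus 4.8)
The plan is to compute, directly, how the number of occurrences of $M$ changes when the new largest letter $n$ is inserted immediately in front of a right-to-left maximum $\pi_j$ of $\pi$; call the resulting permutation $\pi'$. First I would isolate the \emph{new} occurrences of $M$. Since $n$ exceeds every other letter, $n$ can never be the smaller (right-hand) letter of a descent, so the only candidate for a new occurrence is the adjacent pair $(n,\pi_j)$. This pair really is an occurrence of $M$: it is an adjacent descent, and because $\pi_j$ is a right-to-left maximum, every letter lying to the right of $\pi_j$ is smaller than $\pi_j$, hence outside the forbidden value-window $(\pi_j,n)$ demanded by $M$. Thus the insertion always creates exactly one new occurrence.

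Next I would determine which old occurrences are destroyed. Inserting $n$ preserves the relative order of all other letters and introduces a value that is larger than every $\pi_i$, so it can never fall inside the value-window of an existing occurrence; the only way to kill an occurrence is to break the adjacency of its two letters, and that happens solely for the pair straddling the insertion point, namely $(\pi_{j-1},\pi_j)$. This pair was an occurrence of $M$ precisely when $\pi_{j-1}>\pi_j$ (its mesh condition being automatic, again since $\pi_j$ is a right-to-left maximum). No other occurrence is affected. Consequently the net change in the number of occurrences of $M$ equals $+1$ when $\pi_{j-1}<\pi_j$ (in particular when $j=1$) and equals $0$ when $\pi_{j-1}>\pi_j$.

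It then remains to identify these two arithmetic cases with the two cases of the statement. The last right-to-left maximum is the global maximum $n-1$ (the largest, and leftmost, of the right-to-left maxima), and its predecessor is necessarily smaller; hence no occurrence is destroyed and the count rises by one. Note this is exactly the insertion making $(n,\pi_j)$ an occurrence of $M'$, in harmony with Lemma~\ref{lem:topmax}. For every other right-to-left maximum I would bring in the hypothesis $\pi\in\Av(3142,2\underline{41}3)$, and this is the step I expect to be the main obstacle. The key sublemma is: if $\pi_j$ is a right-to-left maximum other than the global maximum and $\pi_{j-1}<\pi_j$, then $\pi'$ must contain $3142$, so that insertion cannot keep us inside the class. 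Indeed, writing $g=n-1$ at position $p$, the fact that $g$ is the leftmost right-to-left maximum gives $p<j$, and $\pi_{j-1}<\pi_j<g$ forces $p<j-1$; then the letters $g,\pi_{j-1},n,\pi_j$, read in this left-to-right order in $\pi'$, flatten to $3142$. Therefore any right-to-left maximum other than the last whose insertion remains in $\Av(3142,2\underline{41}3)$ satisfies $\pi_{j-1}>\pi_j$, giving net change $0$, which completes the argument.
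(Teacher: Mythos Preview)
Your approach is correct and differs from the paper's. The paper reasons through the block decomposition $A,B,C$ of the construction (Figure~\ref{fig:perm_to_indecomp_perm}), arguing that any occurrence of $M$ in $\pi$ lies in a single block and survives the rearrangement to $\tilde{B}\tilde{A}n\tilde{C}$; notably, it never explicitly analyses when a \emph{new} occurrence appears, so your direct bookkeeping --- exactly one created occurrence $(n,\pi_j)$, at most one destroyed occurrence $(\pi_{j-1},\pi_j)$ --- is both more elementary and in fact more complete on that point. Two caveats are worth recording. First, ``right-to-left maximum'' in the lemma is almost certainly a slip for ``left-to-right maximum'' (compare the construction and Lemma~\ref{lem:topmax}); you take it literally and then read ``last'' as ``leftmost'', which lands on the same element $n-1$ and, conveniently, is exactly the hypothesis your argument needs for the mesh condition at $(n,\pi_j)$. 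Second, your treatment of the non-maximal case only gives net change $0$ under the extra hypothesis $\pi'\in\Av(3142,2\underline{41}3)$, which you state explicitly and which is genuinely necessary for bare insertion (e.g.\ $\pi=312$: inserting $4$ before the right-to-left maximum $2$ gives $3142$, raising the $M$-count from $0$ to $1$). This is harmless in context, since the lemma is only ever applied to outputs of the construction, which stay in the class; the paper's version, being about the full insert-and-rearrange step, sidesteps the issue.
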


\begin{proof}
Suppose $\pi$ contains the pattern $M$. Note that the pattern cannot be split between the factors $A$ and $B$ (in Figure~\ref{fig:perm_to_indecomp_perm}) since the underlying classical pattern is $21$,
and it can neither be split between $B$ and $C$ since the left-most element in $C$ is greater than everything in $B$. If the pattern lies entirely in one of the factors $A$, $B$ or $C$ it is easy to see that it will be preserved. Consider, for example, the case when it is in $B$. Then since the relative order of elements in $B$ and $C$ is maintained the pattern will still exist in $\tilde{B}$.
\end{proof}

\begin{theorem}\label{thm:perms-dist}
The number of $2$-faces (excluding the root-face) in a map equals the number of
occurrences of the mesh pattern $M$ in the corresponding
$(3142,2\underline{41}3)$-avoiding permutation, and the number of nodes in the corresponding $\beta(1,0)$-tree that are a single child with maximum label.
\end{theorem}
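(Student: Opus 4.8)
The three quantities appearing in the statement are the number of internal $2$-faces of the map, the number of occurrences of $M$ in the permutation, and the number of nodes that are a single child with maximum label in the tree. The equality of the first and third of these is exactly Proposition~\ref{aProp1} (the internal $2$-faces are precisely the $2$-faces other than the root-face). So the plan is to prove the one remaining equality, that the number of occurrences of $M$ equals the number of single-child-with-maximum-label nodes, and I would do this by induction on the number of edges $n$, running along the parallel generation of $\beta(1,0)$-trees and $(3142,2\underline{41}3)$-avoiding permutations from Figures~\ref{fig:indecomp_trees_to_decomp_tree}--\ref{fig:perm_to_indecomp_perm}. The base case $n=0$ is the single node and the empty permutation, where all three statistics vanish.

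First I would check that both statistics are additive over the decomposable decomposition, which reduces everything to the indecomposable-extension step. On the permutation side no occurrence of $M$ can straddle two indecomposable components: the underlying classical pattern is $21$, yet at each boundary the whole left block lies below the whole right block, so there is not even a descent across a boundary, and occurrences living inside a component survive because every later component sits entirely above it; this is the observation already used in the proof of Lemma~\ref{lem:occM}. On the tree side, gluing the roots of several indecomposable trees into one new root creates a node with at least two children and gives every former component-root a sibling, so no single-child node is created and every single-child-with-maximum-label node of the glued tree already lay inside a component. Hence each statistic is the sum of the corresponding statistics over the components.

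Next I would treat the indecomposable step, where a tree $T$ on $n$ edges is built from a tree $T'$ on $n-1$ edges of root label $k$ by adjoining a new root of label $a$ with $1\le a\le k$ (Figure~\ref{fig:tree_to_indecomp_tree}). The only node whose status can change is the former root of $T'$, which becomes the unique child of the new root; it receives label $a$ while its children are unchanged with label-sum $k$, so it has maximum label exactly when $a=k$ (in the degenerate case $T'=$ single node it becomes a leaf and is not counted, matching the fact that the resulting permutation has length $1$ and no occurrence of $M$). Thus the tree statistic increases by one precisely when $a=k$ and is otherwise unchanged. On the permutation side, Lemma~\ref{lem:occM} shows that exactly one of the available insertions of the new largest letter increases the number of occurrences of $M$ by one while all the others leave it unchanged, and Lemma~\ref{lem:topmax} identifies that distinguished insertion as the extension with $a=k$, since it is the one producing an occurrence of $M'$ with the new letter on top. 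Both statistics therefore obey the identical recursion, and the induction closes; combined with Proposition~\ref{aProp1} this gives the full three-way equality.

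The main obstacle is the bookkeeping inside this step: one must verify that the single new occurrence detected by Lemma~\ref{lem:occM} is genuinely the occurrence of $M'$ guaranteed by Lemma~\ref{lem:topmax}, that it is never subsequently destroyed as the construction continues, and that no extra occurrences are manufactured either by an extension with $a<k$ or by the later gluing of components. The persistence is exactly the mechanism highlighted in the proof of Theorem~\ref{thm:perms}: reversing or extending the construction keeps a descent a descent and never inserts a letter of intermediate value to the right of it, so an $M'$-occurrence created inside a component remains an $M$-occurrence in the whole permutation. Once these three checks are in place the two recursions agree term by term, which is where I expect the real care to be required.
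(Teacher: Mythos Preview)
Your proposal is correct and follows essentially the same route as the paper: reduce to components by additivity, then induct along the parallel generation using Lemma~\ref{lem:occM} to track the permutation side and the $a=k$ case of Figure~\ref{fig:tree_to_indecomp_tree} on the tree side, with Proposition~\ref{aProp1} supplying the link to internal $2$-faces. The paper's own proof is considerably terser and leaves several of your verifications (additivity on the tree side, the explicit invocation of Lemma~\ref{lem:topmax} to match $a=k$ with the distinguished insertion, persistence of occurrences) implicit, so your write-up is in fact a more complete rendering of the same argument.
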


\begin{proof}
Since an occurrence of the pattern $M$ lies entirely in an indecomposable component of a permutation $\pi$ we see that the number of occurrences of $M$ equals the sum of the numbers of occurrences in each component. The theorem now follows from Lemma~\ref{lem:occM}, since the number of occurrences of $M$ can only be increased by inserting a new largest element in a component in front of the last left-to-right maximum in that component. This corresponds exactly to increasing the number of nodes that are a single child with maximum label.
\end{proof}

A permutation does not start with the largest element if and only if it avoids the mesh pattern $N = \pattern{scale=0.75}{ 1 }{ 1/1 }{0/0,0/1,1/1}$. This gives the following corollary to Theorem \ref{thm:perms-dist}.

\begin{corollary}
Permutations in $\Av(3142,2\underline{41}3,M,N)$ correspond to $2$-face-free maps.
\end{corollary}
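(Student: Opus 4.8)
The plan is to split the condition ``$2$-face-free'' into two independent requirements and match each to a forbidden pattern. A map fails to be $2$-face-free precisely when it has a $2$-face, and any $2$-face is either an internal face or the root-face; hence a map is $2$-face-free if and only if it has no internal $2$-face \emph{and} its root-face is not a $2$-face. The first requirement is already translated for us: by Theorem~\ref{thm:perms-dist} the number of $2$-faces other than the root-face equals the number of occurrences of $M$, so ``no internal $2$-face'' is exactly ``avoids $M$''. Thus the only genuinely new task is to express ``the root-face is not a $2$-face'' as avoidance of $N$.

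For the root-face I would invoke Table~\ref{table:stats}, by which the root-face degree equals the root-label plus one. Since a $2$-face is a face of degree $2$, the root-face is a $2$-face if and only if the root-label of the corresponding $\beta(1,0)$-tree equals $1$. So the whole corollary reduces to the equivalence: under the bijection of Section~\ref{sec:patts}, a tree has root-label $1$ if and only if its permutation begins with its largest letter, which by the remark preceding the corollary is exactly the condition that the permutation contains $N$.

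This last equivalence is the heart of the matter, and I would establish it through the generation procedure. First note that root-label $1$ forces the root to have a single child of label $1$, so the tree is indecomposable; conversely a permutation beginning with its largest letter is indecomposable, so both sides live in the indecomposable world described by Figure~\ref{fig:perm_to_indecomp_perm}. There an indecomposable tree of root-label $a$ arises from a tree of root-label $k\ge a$ by inserting $n$ before one of the $k$ left-to-right maxima, the choices $a\in\{1,\dots,k\}$ matching the $k$ maxima in order. Lemma~\ref{lem:topmax} anchors the top of this matching ($a=k$ corresponds to the last, rightmost maximum), so the minimal choice $a=1$ corresponds to inserting $n$ before the \emph{first} maximum, that is, at the very front. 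When $n$ is placed at the front the whole word is already indecomposable, so in the notation of Figure~\ref{fig:perm_to_indecomp_perm} one has $A=B=\emptyset$, the rearrangement leaves $n\tilde{C}$, and the output begins with $n$; running this backwards gives the converse.

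The main obstacle is to make the step $a=1\leftrightarrow$ ``front insertion'' airtight rather than merely ``complementary to Lemma~\ref{lem:topmax}''. The clean way is to prove by induction on the number of edges the invariant that the root-label equals the number of left-to-right maxima of the permutation. For decomposable objects both quantities are additive over the components, since the left-to-right maxima of a direct sum are exactly the union of those of the summands; for the indecomposable step one checks that inserting $n$ before the $j$-th maximum produces a word with exactly $j$ left-to-right maxima and that the rearrangement of Figure~\ref{fig:perm_to_indecomp_perm} preserves this count. Granting the invariant, root-label $1$ is equivalent to having a unique left-to-right maximum, which occurs precisely when the first letter is the largest. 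Combining the three translations, a map is $2$-face-free if and only if its permutation avoids both $M$ and $N$ on top of $3142$ and $2\underline{41}3$, which is the assertion of the corollary.
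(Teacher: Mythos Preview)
Your proposal is correct and follows essentially the same route as the paper: split $2$-face-free into the primitive condition (handled by $M$ via Theorem~\ref{thm:perms-dist}) plus the root-face condition, and identify the latter with the case $a=1$ of Figure~\ref{fig:tree_to_indecomp_tree}, which is exactly avoidance of $N$. The paper's own proof is a one-liner that simply asserts the $a=1\leftrightarrow$``starts with the largest letter'' correspondence and points to the right-hand forbidden subtree of Figure~\ref{fig:2-ff}; your additional invariant ``root-label $=$ number of left-to-right maxima'' (which is indeed established in~\cite{CKS}) makes this step rigorous rather than leaving it implicit.
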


\begin{proof}
A permutation starts with the largest element in the case when we have $a=1$ in Figure \ref{fig:tree_to_indecomp_tree}, and this is the prohibited structure to the right in Figure \ref{fig:2-ff}.
\end{proof}

Since the permutations in $\Av(3142,2\underline{41}3,M)$ are in one-to-one correspondence with primitive planar maps and primitive $\beta(1,0)$-trees, it is natural to call these permutations \emph{primitive} among the permutations in $\Av(3142,2\underline{41}3)$.
Thus, one can raise a question of generating all permutations in $\Av(3142,2\underline{41}3)$ from primitive ones, which can be answered as follows.

Given a primitive permutation $\pi$ of length in $\Av(3142,2\underline{41}3)$ and any letter $x$ in $\pi$, a smaller letter $y$
can be inserted adjacent to $x$, creating a descent $xy$, as long as this descent is an occurrence of $M$
that is not involved in an occurrence of the pattern $3142$. Equivalently we can require that $xy$ is an occurrence of at least one
of the mesh patterns below.
\[
\pattern{scale=1}{ 2 }{ 1/2, 2/1 }{0/1,1/0,1/1,1/2,2/1}, \qquad
\pattern{scale=1}{ 2 }{ 1/2, 2/1 }{0/0,1/0,1/1,1/2,2/1}
\]
Using the procedure above we will create all permutations in $\Av(3142,2\underline{41}3)$. Indeed, given a permutation in $\Av(3142,2\underline{41}3)$, we can destroy all occurrences of the pattern $M$, one by one, by removing the smaller element in each occurrence (it is easy to see that this operation will not create an occurrence of 3142 or $2\underline{41}3$) and ending up with a primitive permutation. Consider for example the permutation $25314$ in $\Av(3142,2\underline{41}3)$. This permutation contains the mesh pattern $M$ in the descent $31$. If we remove the $1$ we end up with a primitive permutation in $\Av(3142,2\underline{41}3)$.

\section{Open questions}

We end with a few open questions.
\begin{enumerate}

\item West-$2$-stack-sortable permutations, first considered by West~\cite{W90}, are in bijection with planar maps, $\beta(1,0)$-trees and the permutations in the set $\Av(3142,2\underline{41}3)$ studied above. It would be interesting to try to understand what describes the primitive permutations among West-$2$-stack-sortable permutations.

\item Using the bijection between $\beta(1,0)$-trees and maps we were not able to determine how multiple edges in a map are manifested in the corresponding $\beta(1,0)$-tree. Is this possible, perhaps with a different bijection? 

\item Similarly, can we understand what structure multiple edges in planar maps correspond to in $(3142,2\underline{41}3)$-avoiding permutations?

\item Finally, the bijection presented in Bonichon et al.~\cite{BBF} between plane bipolar orientations
and Baxter permutations specializes to a bijection between $2413$-avoiding Baxter permutations and
non-separable planar maps. It can thus be seen as a generalization of the bijection in
Dulucq et al~\cite{DGW,DGG}. Can our work be extended to the more general setting of plane bipolar
orientations and Baxter permutations?

\end{enumerate}

\bibliographystyle{alpha}
\bibliography{maps-refs}

\begin{thebibliography}{BBMF10}

\bibitem[BBMF10]{BBF}
N.~Bonichon, M.~Bousquet-M{\'e}lou, and E.~Fusy.
\newblock Baxter permutations and plane bipolar orientations.
\newblock {\em S\'em. Lothar. Combin.}, 61A:Art. B61Ah, 29 pp. (electronic),
  2010.

\bibitem[BC11]{BC}
P.~Br{\"a}nd{\'e}n and A.~Claesson.
\newblock Mesh patterns and the expansion of permutation statistics as sums of
  permutation patterns.
\newblock {\em Electron. J. Combin.}, 18, 2011.

\bibitem[Bro63]{B}
W.G. Brown.
\newblock Enumeration of non-separable planar maps.
\newblock {\em Canad. J. Math}, 15:526--545, 1963.

\bibitem[BS00]{BS}
Eric Babson and Einar Steingr{\'{\i}}msson.
\newblock Generalized permutation patterns and a classification of the
  {M}ahonian statistics.
\newblock {\em S\'em. Lothar. Combin.}, 44:Art. B44b, 18 pp. (electronic),
  2000.

\bibitem[CJS97]{CJS}
R.~Cori, B.~Jacquard, and G.~Schaeffer.
\newblock Description trees for some families of planar maps.
\newblock {\em Formal Power Series and Algebraic Combinatorics, Proceedings of
  the 9th Conference, Vienna}, pages 196--208, 1997.

\bibitem[CKdM12]{CKD}
A.~Claesson, S.~Kitaev, and A.~de~Mier.
\newblock Various ways to generate $\beta(0,1)$-trees and their applications,
  in preparation.
\newblock 2012.

\bibitem[CKS09]{CKS}
A.~Claesson, S.~Kitaev, and E.~Steingr\'{\i}msson.
\newblock Decompositions and statistics for $\beta(1,0)$-trees and nonseparable
  permutations.
\newblock {\em Adv. Appl. Math.}, 42:313--328, 2009.

\bibitem[CS03]{CS}
R.~Cori and G.~Schaeffer.
\newblock Description trees and {T}utte formulas.
\newblock {\em Theoret. Comput. Sci.}, 292:165--183, 2003.

\bibitem[DGG98]{DGG}
S.~Dulucq, S.~Gire, and O.~Guibert.
\newblock A combinatorial proof of {J}.\ {W}est's conjecture.
\newblock {\em Discrete Math.}, 187:71--96, 1998.

\bibitem[DGW96]{DGW}
S.~Dulucq, S.~Gire, and J.~West.
\newblock Permutations with forbidden subsequences and nonseparable planar
  maps.
\newblock {\em Discrete Math.}, 153(1):85--103, 1996.

\bibitem[DKRS11]{DKRS}
M.~Dukes, S.~Kitaev, J.~Remmel, and E.~Steingr\'{\i}msson.
\newblock Enumerating (2+2)-free posets by indistinguishable elements.
\newblock {\em J. Combin.}, 2(1):139--163, 2011.

\bibitem[FS09]{FS}
Philippe Flajolet and Robert Sedgewick.
\newblock {\em Analytic combinatorics}.
\newblock Cambridge University Press, Cambridge, 2009.

\bibitem[GW96]{GW}
I.~Goulden and J.~West.
\newblock Raney paths and a combinatorial relationship between rooted non-
  separable maps and two-stack sorted permutations.
\newblock {\em J. Combin. Theory Ser. A}, 75:220--242, 1996.

\bibitem[Kit11]{S}
S.~Kitaev.
\newblock {\em Patterns in Permutations and Words, Monographs in Theoretical
  Computer Science (with a foreword by Jeffrey B. Remmel)}.
\newblock Springer-Verlag, ISBN 978-3-642-17332-5, 2011.

\bibitem[Slo12]{OEIS}
N.~J.~A. Sloane.
\newblock The on-line encyclopedia of integer sequences, published
  electronically at http://www.oeis.org, 2012.

\bibitem[SZJ04]{SZ}
G.~Schaeffer and P.~Zinn-Justin.
\newblock On the asymptotic number of plane curves and alternating knots.
\newblock {\em Experiment. Math.}, 13:483--493, 2004.

\bibitem[Tut63]{T}
W.T. Tutte.
\newblock A census of planar maps.
\newblock {\em Canad. J. Math}, 15:249--271, 1963.

\bibitem[Tut84]{T1}
W.T. Tutte.
\newblock {\em Graph Theory, Encyclopedia of Mathematics and its Applications,
  vol. 21}.
\newblock Addison-Wesley, Reading, MA, (with a foreword by C.St.J.A.
  Nash-Willians), 1984.

\bibitem[Wes90]{W90}
J.~West.
\newblock {\em Permutations with forbidden subsequences and stack-sortable
  permutations}.
\newblock PhD thesis, MIT, 1990.

\bibitem[Zei92]{Z}
D.~Zeilberger.
\newblock A proof of {J}ulian {W}est's conjecture that the number of
  two-stack-sortable permutations of length $n$ is $2(3n)!/((n+1)!(2n+1)!)$.
\newblock {\em Discrete Math.}, 102(1):85--93, 1992.

\end{thebibliography}

\end{document}